\def\mytitle#1{\setcounter{equation}{0}
\setcounter{footnote}{0}
\begin{center}\Large\textbf{#1}\end{center}
\vspace{0.25cm}}
\def\myname#1{\centerline{{\large #1}}\vspace{-0.13cm}}
\newtheorem{theorem}{Theorem}[section]
\newtheorem{corollary}[theorem]{Corollary}
\newtheorem{lemma}[theorem]{Lemma}
\newtheorem{proposition}[theorem]{Proposition}
\theoremstyle{definition}
\newtheorem{definition}[theorem]{Definition}
\newcommand\N{\mathfrak{N}}
\newcommand\F{\mathbf{F}}
\newcommand\T{\mathbf{T}}
\renewenvironment{proof}[1][Proof]{\noindent\textbf{#1: }}{\hspace{\stretch{1}}\rule{0.5em}{0.5em}}
\newcommand\bibitemart{\bibitem}
\newcommand\bibitemproc{\bibitem}
\newcommand\bibitembook{\bibitem}
\begin{document}
\mytitle{The Dynamics of the Forest Graph Operator}
\myname{
Suresh Dara,$^a$ S.\ M.\ Hegde,$^a$ Venkateshwarlu Deva,$^b$ }
\vspace{0.25cm}
\myname{S.\ B.\ Rao,$^b$ Thomas Zaslavsky$^c$}

\begin{center}
$^a$Department of Mathematical and Computational Sciences, \\National Institute of Technology Karnataka, \\Surathkal, Mangalore-575025, India
\\[0.25cm]
$^b$C R Rao Advanced Institute of Mathematics, Statistics and Computer Science,\\
Hyderabad-500 046, India\\[0.25cm]
$^c$Binghamton University, Binghamton, NY, U.S.A. 13902-6000\\[0.5cm]
{\em E-mail:} suresh.dara@gmail.com,  smhegde@nitk.ac.in,\\ venky477@gmail.com, siddanib@yahoo.co.in, zaslav@math.binghamton.edu
\end{center}

\small
\begin{abstract}
In 1966, Cummins introduced the ``tree graph'': the tree graph $\T(G)$ of a graph $G$ (possibly infinite) has all its spanning trees as vertices, and distinct such trees correspond to adjacent vertices if they differ in just one edge, i.e., two spanning trees $T_1$ and $T_2$ are adjacent if $T_2 = T_1 -e +f$ for some edges $e\in T_1$ and $f\notin T_1$. The tree graph of a connected graph need not be connected. To obviate this difficulty we define the ``forest graph'': let $G$ be a labeled graph of order $\alpha$, finite or infinite, and let $\N(G)$ be the set of all labeled maximal forests of $G$. The forest graph of $G$, denoted by $\F(G)$, is the graph with vertex set $\N(G)$ in which two maximal forests $F_1$, $F_2$ of $G$ form an edge if and only if they differ exactly by one edge, i.e., $F_2 = F_1 -e +f$ for some edges $e\in F_1$ and $f\notin F_1$.

Using the theory of cardinal numbers, Zorn's lemma, transfinite induction, the axiom of choice and the well-ordering principle, we determine the $\F$-convergence, $\F$-divergence, $\F$-depth and $\F$-stability of any graph $G$. In particular it is shown that a graph $G$ (finite or infinite) is $\F$-convergent if and only if $G$ has at most one cycle of length 3. The $\F$-stable graphs are precisely $K_3$ and $K_1$. The $\F$-depth of any graph $G$ different from $K_3$ and $K_1$ is finite.  We also determine various parameters of $\F(G)$ for an infinite graph $G$, including the number, order, size, and degree of its components.

\end{abstract}

\begin{quote}
\textbf{Keywords:} Forest graph operator; Graph dynamics.\\

\textbf{Mathematics Subject Clasification 2010:} {Primary 05C76; Secondary 05C05, 05C63}
\end{quote}

\normalsize


\section{Introduction}

A \emph{graph dynamical system} is a set $X$ of graphs together with a mapping $\phi: X\rightarrow X$ (see Prisner \cite{prisner1995graph}).  We investigate the graph dynamical system on finite and infinite graphs defined by the forest graph operator $\F$, which transforms $G$ to its graph of maximal forests.  

Let $G$ be a labeled graph of order $\alpha$, finite or infinite. (All our graphs are labeled.) A \emph{spanning tree} of $G$ is a connected, acyclic, spanning subgraph of $G$; it exists if and only if $G$ is connected. 
Any acyclic subgraph of $G$, connected or not, is called a \emph{forest} of $G$. A forest $F$ of $G$ is said to be \emph{maximal} if there is no forest $F'$ of $G$ such that $F$ is a proper subgraph of $F'$. 
The tree graph $\T(G)$ of $G$ has all the spanning trees of $G$ as vertices, and distinct such trees are adjacent vertices if they differ in just one edge \cite{prisner1995graph, sureshdara2010icm}; i.e., two spanning trees $T_1$ and $T_2$ are adjacent if $T_2 = T_1 -e +f$ for some edges $e\in T_1$ and $f\notin T_1$. The \emph{iterated tree graphs} of $G$ are defined by $\T^0(G) =G$ and $\T^n(G) = \T(\T^{n-1}(G))$ for $n>0$. 
There are several results on tree graphs. See \cite{broersma1996connectivity, zhang1986connectivity, liu1988connectivities} for connectivity of the tree graph, \cite{grimmett1976upper, rodpet1997sharp, teranishi2005number, zhang2005bound, das2007sharp, feng2008sharp, feng2010number, das2013number, fengTAmatching} for bounds on the order of $\T(G)$ (that is, on the number of spanning trees of $G$), \cite{cummins1966hamilton, shank1968note} for Hamilton circuits in a tree graph.

There is one difficulty with iterating the tree graph operator.  The tree graph of an infinite connected graph need not be connected \cite{cummins1966hamilton, shank1968note}, so $\T^2(G)$ may be undefined. For example, $\T(K_{\aleph_0})$ is disconnected (see Corollary \ref{l2.d} in this paper; $\aleph_0$ denotes the cardinality of the set $\mathbb N$ of natural numbers); therefore $\T^2(K_{\aleph_0})$ is not defined. 
To obviate this difficulty with iterated tree graphs, and inspired by the tree graph operator $\T$, we define a forest graph operator. 
Let $\N(G)$ be the set of all maximal forests of $G$. The \emph{forest graph} of $G$, denoted by $\F(G)$, is the graph with vertex set $\N(G)$ in which two maximal forests $F_1$, $F_2$ form an edge if and only if they differ by exactly one edge. The \emph{forest graph operator} (or \emph{maximal forest operator}) on graphs, $G \mapsto \F(G)$, is denoted by $\F$. 
Zorn's lemma implies that every connected graph contains a spanning tree (see \cite{diestel2005graph}); similarly, every graph has a maximal forest.  Hence, the forest graph always exists.  Since when $G$ is connected, maximal forests are the same as spanning trees, then $\F(G)=\T(G)$; that is, the tree graph is a special case of the forest graph. 
We write $\F^2(G)$ to denote $\F(\F(G))$, and in general $\F^n(G) = \F(\F^{n-1}(G))$ for $n \geq 1$, with $\F^0(G) = G$.

\begin{definition}\label{d2}
A graph $G$ is said to be \emph{$\F$-convergent} if $\{\F^n(G): n \in \mathbb{N}\}$ is finite; otherwise it is \emph{$\F$-divergent}.

A graph $H$ is said to be an \emph{$\F$-root} of $G$ if $\F(H)$ is isomorphic to $G$, $\F(H)\cong G$.
The \emph{$\F$-depth} of $G$ is 
$$\sup \{n \in  \mathbb{N} : G\cong \F^n(H) \text{ for some graph } H\}.$$
The $\F$-depth of a graph $G$ that has no $\F$-root is said to be zero.

The graph $G$ is said to be \emph{$\F$-periodic} if there exists a positive integer $n$ such that $\F^n(G)=G$.  The least such integer is called the \emph{$\F$-periodicity} of $G$.  
If $n=1$, $G$ is called \emph{$\F$-stable}.
\end{definition}

This paper is organized as follows. In Section \ref{prelim} we give some basic results. 
In later sections, using Zorn's lemma, transfinite induction, the well ordering principle and the theory of cardinal numbers, we study the number of $\F$-roots and determine the $\F$-convergence, $\F$-divergence, $\F$-depth and $\F$-stability of any graph $G$. In particular we show that: 
i) A graph $G$ is $\F$-convergent if and only if $G$ has at most one cycle of length 3. 
ii) The $\F$-depth of any graph $G$ different from $K_3$ and $K_1$ is finite. 
iii) The $\F$-stable graphs are precisely $K_3$ and $K_1$. 
iv) A graph that has one $\F$-root has innumerably many, but only some $\F$-roots are important.



\section{Preliminaries}\label{prelim}

For standard notation and terminology in graph theory we follow Diestel \cite{diestel2005graph} and Prisner \cite{prisner1995graph}.
 
Some elementary properties of infinite cardinal numbers that we use are (see, e.g., Kamke \cite{kamke1950theory}):
 \begin{enumerate}[\quad(1)]
 \item $\alpha+\beta = \alpha.\beta = \max(\alpha,\beta)$ if $\alpha, \beta$ are cardinal numbers and $\beta$ is infinite.  In particular, $2.\beta=\aleph_0.\beta=\beta$.
 \item $\beta^n = \beta$ if $\beta$ is an infinite cardinal and $n$ is a positive integer.
 \item $\beta<2^\beta$ for every cardinal number.
 \item The number of finite subsets of an infinite set of cardinality $\beta$ is equal to $\beta$.
 \end{enumerate}

We consider finite and infinite labeled graphs \emph{without multiple edges or loops}.  An \emph{isthmus} of a graph $G$ is an edge $e$ such that deleting $e$ divides one component of $G$ into two of $G-e$.  Equivalently, an isthmus is an edge that belongs to no cycle.  Each isthmus is in every maximal forest, but no non-isthmus is.

Let  $\mathfrak{C}(G)$ and $\N(G)$ denote the set of all possible cycles and the set of all maximal forests of a graph $G$, respectively.  
Note that a maximal forest of $G$ consists of a spanning tree in each component of $G$.  A fundamental fact, whose proof is similar to that of the existence of a maximal forest, is the following forest extension lemma:

\begin{lemma}\label{forest-extension}
In any graph $G$, every forest is contained in a maximal forest.
\end{lemma}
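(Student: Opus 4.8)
The plan is to prove the forest extension lemma by a Zorn's lemma argument, exactly parallel to the proof that every graph has a maximal forest (which the paper has already invoked). The statement asserts that an arbitrary forest $F_0$ in $G$ is contained in some maximal forest; the natural approach is to consider the collection of all forests of $G$ that contain $F_0$, partially order it by inclusion, and produce a maximal element by verifying the hypothesis of Zorn's lemma.

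First I would set $\mathcal{P} = \{F : F \text{ is a forest of } G \text{ and } F_0 \subseteq F\}$, ordered by subgraph inclusion. This is nonempty since $F_0 \in \mathcal{P}$. The key step is to check that every chain in $\mathcal{P}$ has an upper bound in $\mathcal{P}$. Given a chain $\{F_i\}_{i \in I}$, I would take its union $F^* = \bigcup_{i \in I} F_i$ (union of vertex sets and edge sets). Clearly $F_0 \subseteq F^*$ and $F^*$ is a subgraph of $G$, so the only thing to verify is that $F^*$ is acyclic. Here is where the chain condition does the work: any cycle is finite, so if $F^*$ contained a cycle $C$, its finitely many edges would each lie in some $F_i$, and because the $F_i$ form a chain, one of these finitely many members would contain all the edges of $C$, contradicting that that $F_i$ is a forest. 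Hence $F^*$ is acyclic, so $F^* \in \mathcal{P}$ and is an upper bound for the chain.

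With the chain condition established, Zorn's lemma yields a maximal element $F$ of $\mathcal{P}$. It then remains to argue that $F$, being maximal among forests \emph{containing} $F_0$, is actually a maximal forest of $G$ in the sense of the definition, i.e.\ maximal among \emph{all} forests of $G$. This is immediate: if some forest $F'$ properly contained $F$, then $F'$ would contain $F_0$ as well, so $F' \in \mathcal{P}$, contradicting the maximality of $F$ in $\mathcal{P}$. Therefore $F$ is a maximal forest of $G$ containing $F_0$.

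I do not expect any serious obstacle here, since the argument is a standard transfinite/Zorn-style maximality proof. The one point requiring genuine care—and the only place the infinite setting matters—is the acyclicity of the union: one must use that cycles are finite objects together with the totally-ordered (chain) structure to locate a single $F_i$ swallowing the whole hypothetical cycle. Everything else is routine bookkeeping with the partial order.
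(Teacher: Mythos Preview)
Your proposal is correct and is precisely the approach the paper has in mind: the paper does not write out a proof but states that it ``is similar to that of the existence of a maximal forest,'' which is the Zorn's lemma argument you carry out. Your handling of the chain condition via finiteness of cycles is the standard (and correct) point of care.
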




\begin{lemma}\label{l2.5}
If $G$ is a complete graph of infinite order $\alpha$, then $|\N(G)| = 2^{\alpha}$.
\end{lemma}

\begin{proof}
Let $G=(V,E)$ be a complete graph of order $\alpha$ ($\alpha$ infinite), i.e., $G=K_{\alpha}$. 
Let $v_1$, $v_2$ be two vertices of $G$ and $V'=V\setminus\{v_1, v_2\}$. Then for every $A\subseteq V'$ there is a spanning tree $T_A$ such that every vertex of $A$ is adjacent only to $v_1$ and every vertex of $V'\setminus A$ is adjacent only to $v_2$. 
It is easy to see that $T_A \neq T_B$ whenever $A\neq B$. 
As the cardinality of the power set of $V'$ is $2^\alpha$, there are at least $2^\alpha$ spanning trees of $G$.
Since $G$ is connected, the maximal forests are the spanning trees; therefore $|\N(G)| \geq 2^{\alpha} $.
Since the degree of each vertex is $\alpha$ and $G$ contains $\alpha$ vertices, the total number of edges in $G$ is $\alpha.\alpha = \alpha$. 
The edge set of a maximal forest of $G$ is a subset of $E$ and the number of all
possible subsets of $E$ is $2^{\alpha}$. Therefore, $G$ has at most
$2^{\alpha}$ maximal forests, i.e., $|\N(G)| \leq 2^{\alpha}$. Hence $|\N(G)| = 2^{\alpha}$.
\end{proof}


For two maximal forests of $G$, $F_1$ and $F_2$, let $d(F_1,F_2)$ denote the distance between them in $\F(G)$.  We connect this distance to the number of edges by which $F_1,F_2$ differ; the result is elementary but we could not find it anywhere in the literature.  We say $F_1, F_2$ \emph{differ by $l$ edges} if $|E(F_1) \setminus E(F_2)|=|E(F_2) \setminus E(F_1)|=l$.

\begin{lemma}\label{l2.3}
Let $l$ be a natural number.  For two maximal forests $F_1, F_2$ of a graph $G$, if $|E(F_1) \setminus E(F_2)|=l$, then $|E(F_2) \setminus E(F_1)|=l$.  
Furthermore, $F_1$ and $F_2$ differ by exactly $l$ edges if and only if $d(F_1,F_2)=l$.
\end{lemma}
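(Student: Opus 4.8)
The plan is to prove both parts at once by a single‑edge exchange argument, organized as an induction on $l$ and paired with an easy monotonicity estimate. Throughout, write $A=E(F_1)\setminus E(F_2)$ and $B=E(F_2)\setminus E(F_1)$, and for any maximal forest $F$ of $G$ set the counters $\delta(F)=|E(F)\setminus E(F_2)|$ and $\varepsilon(F)=|E(F_2)\setminus E(F)|$, so that $\delta(F_1)=|A|$, $\varepsilon(F_1)=|B|$, and $\delta(F_2)=\varepsilon(F_2)=0$. First I would record the monotonicity that yields the distance lower bound. Along any edge of $\F(G)$ a maximal forest $F$ is replaced by $F-e+f$; deleting $e$ lowers $\delta$ by $1$ when $e\notin F_2$ and leaves it fixed otherwise, while adding $f$ raises $\delta$ by $1$ when $f\notin F_2$ and leaves it fixed otherwise, so each step changes $\delta$ by an amount in $\{-1,0,1\}$. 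Hence $\delta$ can change by at most $1$ per step along a path, and since it must pass from $\delta(F_1)=|A|$ to $\delta(F_2)=0$, every $F_1$–$F_2$ path has length at least $|A|$; that is, $d(F_1,F_2)\ge |A|$.

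The core is a single exchange step that lowers both counters by exactly one. Assuming $|A|=l\ge 1$, I would pick any $f\in B$, with endpoints $u,v$. Since $F_1$ is a spanning tree of the component of $G$ containing $f$, there is a unique \emph{finite} path $P$ in $F_1$ from $u$ to $v$, and $Z=P+f$ is the unique (finite) cycle of $F_1+f$. Because $F_2$ is acyclic while $f\in F_2$, the cycle $Z$ is not contained in $F_2$, so $P$ contains an edge $e\in A$ (necessarily $e\neq f$). Then $F_1'=F_1-e+f$ is acyclic, since the only cycle of $F_1+f$ was $Z$ and it used $e$; and $F_1'$ still spans each component, because deleting the tree edge $e$ splits one component into two pieces that the co‑cyclic edge $f\in Z$ immediately reconnects. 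Thus $F_1'$ is a maximal forest differing from $F_1$ in exactly one edge, hence adjacent to $F_1$ in $\F(G)$. As $e\notin F_2$ and $f\in F_2$, this step drops $\delta$ by exactly $1$ (to $l-1$) and likewise drops $\varepsilon$ by exactly $1$.

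Iterating this step $l$ times produces a maximal forest $F^{*}$ with $\delta(F^{*})=0$, i.e. $E(F^{*})\subseteq E(F_2)$; since $F^{*}$ is a maximal forest and $F_2$ is a forest, maximality forces $F^{*}=F_2$. This simultaneously proves everything. It exhibits a walk of length $l$ from $F_1$ to $F_2$ in $\F(G)$, so $d(F_1,F_2)\le l$, which with the lower bound gives $d(F_1,F_2)=l$ whenever $|A|=l$ — the forward direction of Part 2. Tracking $\varepsilon$, which falls by exactly $1$ at each of the $l$ steps and reaches $\varepsilon(F_2)=0$, forces $|B|=\varepsilon(F_1)=l$ (in particular $|B|$ cannot be infinite, as finitely many unit decrements cannot exhaust an infinite cardinal); this is Part 1. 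For the converse implication of Part 2, if $d(F_1,F_2)=l$ is finite, the monotonicity estimate applied to a shortest path gives $|A|\le l$, so $|A|$ is finite; applying the forward direction to the natural number $|A|$ yields $d(F_1,F_2)=|A|$, whence $|A|=l$, and then Part 1 gives $|B|=l$.

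The step I expect to demand the most care is the verification, in the infinite setting, that $F_1-e+f$ is again a maximal forest: one must confirm that the fundamental cycle $Z$ is finite (it is, being the unique path in a tree closed up by one edge) and that removing $e$ while inserting the co‑cyclic edge $f$ both avoids creating a new cycle and preserves connectivity inside the one affected component. Once this exchange step is secured, the remainder is just bookkeeping with the two counters $\delta$ and $\varepsilon$ and the base case $l=0$, where $A=\varnothing$ forces $E(F_1)\subseteq E(F_2)$ and hence $F_1=F_2$ by maximality, so that $|B|=0$ and $d(F_1,F_2)=0$.
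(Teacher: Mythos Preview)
Your proof is correct and follows essentially the same strategy as the paper's: a single-edge exchange argument organized by induction on $l$, together with the observation that each step of a path in $\F(G)$ changes the symmetric difference with the target forest by at most one, giving the lower bound $d(F_1,F_2)\ge l$. The only cosmetic differences are that the paper runs the exchange from $F_2$ toward $F_1$ via the fundamental cut of a deleted edge, whereas you run it from $F_1$ toward $F_2$ via the fundamental cycle of an added edge, and that you track both counters $\delta,\varepsilon$ simultaneously to get Parts 1 and 2 in one pass rather than in two separate inductions.
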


We cannot apply to an infinite graph the simple proof for finite graphs, in which the number of edges in a maximal forest is given by a formula.  Therefore, we prove the lemma by edge exchange.

\begin{proof}
We prove the first part by induction on $l$.  Let $F_1, F_2$ be maximal forests of $G$ and let $E(F_1) \setminus E(F_2)=\{e_{1}',e_{2}',\ldots,e_{k}'\}$, $E(F_2) \setminus E(F_1)=\{e_{1},e_{2},\ldots,e_{l}\}$.  If $l=0$ then $k=0=l$ because $F_2 = F_1$.  Suppose $l>0$; then $k>0$ also.  Deleting $e_l$ from $F_2$ divides a tree of $F_2$ into two trees.  Since these trees are in the same component of $G$, there is an edge of $F_1$ that connects them; this edge is not $e_1$ so it is not in $F_2$; therefore, it is an $e_i'$, say $e_k'$.  Let $F_2' = F_2 - e_l + e_k'$.  Then $E(F_1) \setminus E(F_2')=\{e_{1}',e_{2}',\ldots,e_{k-1}'\}$, $E(F_2) \setminus E(F_1)=\{e_{1},e_{2},\ldots,e_{l-1}\}$.  By induction, $k-1=l-1$.

We also prove the second part by induction on $l$.  Assume $F_1,F_2$ differ by exactly $l$ edges and define $F_2'$ as above.  If $l=0,1$, clearly $d(F_1,F_2)=l$.  Suppose $l>1$.  
In a shortest path from $F_1$ to $F_2$, whose length is $d(F_1,F_2)$, each successive edge of the path can increase the number of edges not in $F_1$ by at most 1.  Therefore, $F_1$ and $F_2$ differ by at most $d(F_1,F_2)$ edges.  That is, $l\leq d(F_1,F_2)$.
Conversely, $d(F_1,F_2')=l-1$ by induction and there is a path in $\F(G)$ from $F_1$ to $F_2'$ of length $l-1$, then continuing to $F_2$ and having total length $l$.  Thus, $d(F_1,F_2)\leq l$.
\end{proof}


From the above lemma we have two corollaries.

\begin{corollary}\label{l2.4}
For any graph $G$, $\F(G)$ is connected if and only if any two maximal forests of $G$ differ by at most a finite number of edges. 
\end{corollary}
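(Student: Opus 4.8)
This corollary is essentially a global restatement of the second part of Lemma~\ref{l2.3}, so the plan is to reduce the connectivity question to the distance characterization already established there. Recall that $\F(G)$ is connected precisely when every pair of vertices (maximal forests) $F_1, F_2$ is joined by a path of finite length, i.e.\ when $d(F_1,F_2)$ is finite for all $F_1,F_2 \in \N(G)$. Thus the whole task is to show that finiteness of $d(F_1,F_2)$ is equivalent to $F_1$ and $F_2$ differing by a finite number of edges.

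First I would prove the forward direction. Suppose $\F(G)$ is connected, and let $F_1, F_2$ be any two maximal forests. Then $d(F_1,F_2)=l$ for some natural number $l$, and by the ``only if'' half of Lemma~\ref{l2.3} the forests $F_1$ and $F_2$ differ by exactly $l$ edges; in particular they differ by at most finitely many. For the converse, suppose every two maximal forests differ by at most a finite number of edges. Given $F_1, F_2$ differing by exactly $l$ edges, the ``if'' half of Lemma~\ref{l2.3} yields $d(F_1,F_2)=l<\infty$, so $F_1$ and $F_2$ lie in the same component of $\F(G)$. Since this holds for every pair, $\F(G)$ is connected.

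The one point that deserves care, and where I would make the argument explicit, is the contrapositive content lurking in the forward direction: two maximal forests that differ by \emph{infinitely} many edges cannot be joined by any finite walk in $\F(G)$. This is because each edge of $\F(G)$ is an exchange $F \mapsto F - e + f$ that alters the edge set by exactly one edge, so a walk of length $n$ can only connect forests whose symmetric difference has at most $2n$ edges. Hence an infinite edge-difference forces $d(F_1,F_2)=\infty$, placing the two forests in distinct components. No genuine obstacle arises here; the force of the result lies entirely in Lemma~\ref{l2.3}, and the remaining reasoning is a routine translation between ``finite distance'' and ``finite edge-difference.''
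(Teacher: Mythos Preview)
Your argument is correct and matches the paper's approach exactly: the paper states this result as an immediate corollary of Lemma~\ref{l2.3} with no separate proof, and your write-up simply makes that deduction explicit. One small slip to fix in the write-up: you have interchanged the ``if'' and ``only if'' halves of Lemma~\ref{l2.3} (from $d(F_1,F_2)=l$ you need the ``if'' half to conclude the forests differ by $l$ edges, and vice versa), though the underlying logic is sound.
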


\begin{corollary}\label{l2.d}
If $G=K_\alpha$, $\alpha$ infinite, then $\F(G)$ is disconnected.
\end{corollary}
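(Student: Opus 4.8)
The plan is to apply Corollary \ref{l2.4}, which characterizes connectedness of $\F(G)$ by the condition that every two maximal forests differ by at most a finite number of edges. To conclude that $\F(K_\alpha)$ is \emph{disconnected}, it therefore suffices to exhibit two maximal forests of $K_\alpha$ whose edge sets have infinite symmetric difference. Since $K_\alpha$ is connected, its maximal forests are precisely its spanning trees, so I only need two spanning trees that differ by infinitely many edges.

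For the construction I would use two spanning stars. Fix distinct vertices $v_1, v_2 \in V$, let $T_1$ be the star centered at $v_1$, with edge set $\{v_1 x : x \in V \setminus \{v_1\}\}$, and let $T_2$ be the star centered at $v_2$. Both are connected, acyclic, and spanning, hence spanning trees of $K_\alpha$. Their only common edge is $v_1 v_2$, so $E(T_1) \setminus E(T_2) = \{v_1 x : x \in V \setminus \{v_1, v_2\}\}$, a set of cardinality $\alpha$. (Equivalently, one can reuse the trees $T_A$ from the proof of Lemma \ref{l2.5}, taking $A = V'$ and $A = \emptyset$.)

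Since $\alpha$ is infinite, $T_1$ and $T_2$ do not differ by any finite number of edges, so by Corollary \ref{l2.4} the graph $\F(K_\alpha)$ is disconnected, as claimed. I expect no real obstacle in this argument; the only points to verify are that the two stars are genuinely spanning trees of $K_\alpha$ and that the symmetric difference of their edge sets is infinite, both of which are immediate from the definitions.
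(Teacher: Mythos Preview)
Your proposal is correct and is exactly the approach the paper intends: Corollary~\ref{l2.d} is stated without proof as an immediate consequence of Corollary~\ref{l2.4}, and your two-star construction (or the equivalent use of $T_{V'}$ and $T_{\emptyset}$ from Lemma~\ref{l2.5}) cleanly supplies the pair of spanning trees with infinite symmetric difference that the paper leaves implicit.
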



\begin{lemma}\label{edge-count}
Let $G$ be a graph with $\alpha$ vertices and $\beta$ edges and with no isolated vertices.  If either $\alpha$ or $\beta$ is infinite, then $\alpha=\beta$.
\end{lemma}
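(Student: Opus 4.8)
The plan is to sandwich the two cardinalities between one another using the two obvious combinatorial bounds, and then to invoke the elementary cardinal arithmetic recorded in properties (1) and (2) above.

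First I would observe that if either of $\alpha,\beta$ is infinite, then both are. A single edge is incident with exactly two vertices, so a graph with $\beta$ edges has at most $2\beta$ non-isolated vertices; since $G$ has no isolated vertices, this gives $\alpha \le 2\beta$, and in particular $\beta$ finite would force $\alpha$ finite. Conversely, every edge is a two-element subset of $V$, so $\beta \le \binom{\alpha}{2} \le \alpha^2$, and hence $\alpha$ finite would force $\beta$ finite. Thus the infinitude of either cardinal entails the infinitude of both, and under the hypothesis of the lemma we may assume both $\alpha$ and $\beta$ are infinite.

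With both infinite, the two bounds collapse to the desired equality. From $\alpha \le 2\beta$ together with property (1), which gives $2\beta = \beta$, we obtain $\alpha \le \beta$; from $\beta \le \alpha^2$ together with property (2), which gives $\alpha^2 = \alpha$, we obtain $\beta \le \alpha$. The Cantor--Schr\"oder--Bernstein theorem (or the comparability of cardinals) then yields $\alpha = \beta$.

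The argument is essentially routine cardinal bookkeeping, and I expect no genuine obstacle. The only point requiring care is the first bound $\alpha \le 2\beta$: here the hypothesis of no isolated vertices is exactly what guarantees that the endpoints of the $\beta$ edges exhaust $V$, so that $V$ is a union of $\beta$ two-element sets and property (1) applies. The remaining subtlety is simply to treat the finite and infinite cases cleanly, which the opening paragraph handles by ruling out the mixed finite/infinite possibility.
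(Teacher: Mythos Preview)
Your proof is correct and follows essentially the same approach as the paper: both arguments use the bounds $\alpha \le 2\beta$ (from the absence of isolated vertices) and $\beta \le \alpha^2$, deduce that both cardinals must be infinite together, and then collapse the inequalities via $2\beta=\beta$ and $\alpha^2=\alpha$ to conclude $\alpha=\beta$.
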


\begin{proof}
We know that $|E(G)|\leq|V(G)|^2$, i.e., $\beta\leq\alpha^2$ so if $\beta$ is infinite, $\alpha$ must also be infinite.  We also know, since each edge has two endpoints, that $|V(G)|\leq2|E(G)|$, i.e., $\alpha\leq2.\beta$ so if $\alpha$ is infinite, then $\beta$ must be infinite.  
Now assuming both are infinite, $\alpha^2=\alpha$ and $2.\beta=\beta$, hence $\alpha=\beta$.
\end{proof}


The following lemmas are needed in connection with $\F$-convergence and $\F$-divergence in Section \ref{convergence} and $\F$-depth in Section \ref{depth}.

\begin{lemma}\label{l2.1}
Let $G$ be a graph. If $K_n$ (for finite $n\geq2$) is a subgraph of $G$, then $K_{\lfloor{n^2}/{4}\rfloor}$ is a subgraph of $\F(G)$.
\end{lemma}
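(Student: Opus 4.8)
The key observation is arithmetical: $\lfloor n^2/4\rfloor = \lfloor n/2\rfloor\cdot\lceil n/2\rceil$, which is exactly the number of edges running across a balanced bipartition of the $n$ vertices of the clique. This suggests that I should produce the required $\lfloor n^2/4\rfloor$ pairwise-adjacent vertices of $\F(G)$ as maximal forests that all share a common ``base'' and differ only in which cross edge they use. Concretely, writing $W$ for the vertex set of the given $K_n\subseteq G$, I would partition $W=X\cup Y$ with $|X|=\lfloor n/2\rfloor$ and $|Y|=\lceil n/2\rceil$, and aim to find a single forest $\Phi$ of $G$ such that $\Phi+e$ is a maximal forest of $G$ for every one of the $|X||Y|$ edges $e=xy$ of $K_n$ with $x\in X$, $y\in Y$.

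The plan for building $\Phi$ is as follows. Let $G_0$ be the component of $G$ containing $W$. First I would take, inside the clique, a spanning tree $T_X$ of $X$ and a spanning tree $T_Y$ of $Y$ (these exist because $X$ and $Y$ induce complete graphs); their union is a forest of $G_0$ with exactly two components, one containing all of $X$ and one containing all of $Y$. Then I would extend this forest, one vertex at a time, to a spanning forest $\Phi_0$ of $G_0$ that still has exactly two components $S_X\supseteq X$ and $S_Y\supseteq Y$, by repeatedly adding an edge from a covered vertex to a new uncovered vertex (such an edge exists since $G_0$ is connected), which enlarges one component without merging the two. Finally I would set $\Phi=\Phi_0\cup M$, where $M$ is a maximal forest of the rest of $G$ (which exists by the forest extension Lemma~\ref{forest-extension}). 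For each cross edge $e=xy$, adding $e$ to $\Phi$ joins $S_X$ and $S_Y$ into a spanning tree of $G_0$, so $F_e:=\Phi+e$ is a maximal forest of $G$.

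It then remains to check the clique property. For distinct cross edges $e,e'$ we have $E(F_e)\setminus E(F_{e'})=\{e\}$ and $E(F_{e'})\setminus E(F_e)=\{e'\}$, so $F_e$ and $F_{e'}$ differ by exactly one edge; by the second part of Lemma~\ref{l2.3} this means $d(F_e,F_{e'})=1$, i.e.\ $F_e$ and $F_{e'}$ are adjacent in $\F(G)$. Since the maps $e\mapsto F_e$ are clearly injective, the $|X||Y|=\lfloor n^2/4\rfloor$ forests $F_e$ form a copy of $K_{\lfloor n^2/4\rfloor}$ in $\F(G)$.

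The main obstacle is the construction of $\Phi_0$ in full generality: the naive ``add edges greedily'' argument terminates only for finite $G$, and one must guarantee that the two components never fuse while still covering \emph{every} vertex of a possibly infinite $G_0$. I would handle this with Zorn's lemma (or transfinite induction), ordering by inclusion the forests of $G_0$ that extend $T_X\cup T_Y$ and still have exactly two components separating $X$ from $Y$; the delicate point to verify is that the union over a chain of such forests is again a forest of this type --- it is acyclic and cannot connect $X$ to $Y$, since any cycle or connecting path would be finite and hence already present at some stage of the chain --- after which maximality forces the top element to be spanning. Everything else in the argument is elementary.
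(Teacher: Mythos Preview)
Your argument is correct and rests on the same idea as the paper's: bipartition the $n$ clique vertices into halves $X,Y$ and exhibit $|X||Y|=\lfloor n^{2}/4\rfloor$ maximal forests that agree everywhere except in which single $X$--$Y$ edge of the clique they contain. The difference is only in how the common base $\Phi$ is built. The paper sidesteps your Zorn's-lemma construction entirely: it takes a Hamiltonian path $L=v_{1}v_{2}\cdots v_{n}$ of the $K_{n}$, extends $L$ to a maximal forest $F$ of $G$ in one stroke via the forest extension Lemma~\ref{forest-extension}, and then sets $\Phi=F-v_{\lfloor n/2\rfloor}v_{\lfloor n/2\rfloor+1}$. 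Deleting this middle edge automatically leaves $\{v_{1},\dots,v_{\lfloor n/2\rfloor}\}$ and $\{v_{\lfloor n/2\rfloor+1},\dots,v_{n}\}$ in distinct components of $\Phi$, so each cross edge $v_{i}v_{j}$ restores a maximal forest $F_{ij}$. Your route works, but it effectively re-proves a two-component variant of forest extension from scratch; the paper's buys brevity by leaning on the lemma already available, and in particular never needs to worry about keeping two components from fusing during the extension.
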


\begin{proof}
Let $G$ be a graph such that $K_n$ ($n\geq2$, finite) is a subgraph of $G$ with vertex labels $v_1, v_2, \ldots, v_n$. Then there is a path $L = v_1, v_2, \ldots ,v_n$ of order $n$ in $G$. Let $F$ be a maximal forest of $G$ such that $F$ contains the path $L$. In $F$ if we replace the edge $v_{\lfloor{n}/{2}\rfloor}v_{\lfloor{n}/{2}\rfloor+1}$ by any other edge $v_iv_j$ where $i=1,\ldots,\lfloor{n}/{2}\rfloor$ and $j=\lfloor{n}/{2}\rfloor+1,\ldots, n$, we get a maximal forest $F_{ij}$.  Since there are $\lfloor {n^2}/{4} \rfloor$ such edges $v_iv_j$, there are $\lfloor {n^2}/{4} \rfloor$ maximal forests $F_{ij}$ (of which one is $F$).  Any two forests $F_{ij}$ differ by one edge.  It follows that they form a complete subgraph in $\F(G)$. Therefore $K_{\lfloor{n^2}/{4}\rfloor}$ is a subgraph of $\F(G)$.
\end{proof}


\begin{lemma}\label{cycle-kn}
If $G$ has a cycle of (finite) length $n$ with $n\geq3$, then $\F(G)$ contains $K_n$.
\end{lemma}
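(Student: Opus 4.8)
The plan is to exhibit $n$ maximal forests of $G$ that pairwise differ by exactly one edge. Since, by definition of $\F(G)$, two maximal forests are adjacent precisely when they differ by one edge, these $n$ forests will be mutually adjacent vertices of $\F(G)$ and hence induce a copy of $K_n$.

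Let $C=e_1e_2\cdots e_n$ be the given cycle, of finite length $n\geq 3$. Deleting one edge, say $e_n$, leaves the path $P=e_1e_2\cdots e_{n-1}$, which is a forest of $G$. By the forest extension lemma (Lemma~\ref{forest-extension}), $P$ extends to a maximal forest $F$ of $G$. Since $F$ is acyclic it cannot contain all of $C$, so among the cycle edges $F$ contains exactly $e_1,\ldots,e_{n-1}$ and omits $e_n$. I set $F_n:=F$, and for each $i\in\{1,\ldots,n-1\}$ I put $F_i:=F-e_i+e_n$.

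The one point that needs verification is that each $F_i$ is again a maximal forest. The endpoints of $e_n$ are joined in $F$ by the path $P$, so adding $e_n$ to $F$ creates exactly one cycle, namely $C$ itself; deleting any single edge $e_i$ of $C$ then breaks this cycle and leaves an acyclic subgraph spanning the same vertex set, with a spanning tree in each component, hence a maximal forest. This fundamental-cycle exchange is the main (and essentially the only) technical obstacle; everything else is bookkeeping.

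Finally I would record the pairwise differences. By construction $F_i$ contains every cycle edge except $e_i$, and $F_i$ agrees with $F$ off the cycle $C$. Hence for $i\neq j$ the edges of $F$ outside $C$ cancel and $E(F_i)\setminus E(F_j)=\{e_j\}$, $E(F_j)\setminus E(F_i)=\{e_i\}$, so $F_i$ and $F_j$ differ by exactly one edge and are in particular distinct. Thus $F_1,\ldots,F_n$ form $n$ mutually adjacent vertices of $\F(G)$, i.e.\ $\F(G)$ contains $K_n$.
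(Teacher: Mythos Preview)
Your proof is correct and follows essentially the same approach as the paper: extend a path $C-e$ to a maximal forest and then swap out one cycle edge at a time to obtain $n$ pairwise adjacent maximal forests. You in fact give more detail than the paper does, explicitly verifying via the fundamental-cycle argument that each $F_i$ is again a maximal forest, whereas the paper simply asserts this.
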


\begin{proof}
Suppose that $G$ has a cycle $C_n$ of length $n$ with edge set $\{e_1,e_2,\ldots,e_n\}$. 
Let $P_i=C_n - e_i$ for $i=1,2, \dots, n$ and let $F_1$ be a maximal forest of $G$ containing the path $P_1$.  Define $F_i=F_1 \setminus P_1 \cup P_i$ for $i=2, 3, \dots, n$.  
These $F_i$'s are maximal forests of $G$ and any two of them differ by exactly one edge, so they form a complete graph $K_n$ in $\F(G)$.  
\end{proof}

In particular, $\F(C_n)=K_n$.

\begin{lemma}\label{k-cayley}
Suppose that $G$ contains $K_n$, where $n\geq3$.  Then $\F^2(G)$ contains $K_{n^{n-2}}$.
\end{lemma}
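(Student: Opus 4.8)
The target number $n^{n-2}$ is exactly the number of spanning trees of $K_n$ (Cayley's formula), which strongly suggests building the desired clique out of the tree graph $\T(K_n)=\F(K_n)$. My plan is to show that $\F(G)$ contains a cycle of length $n^{n-2}$ and then to invoke Lemma \ref{cycle-kn}: a finite cycle of length $m\geq 3$ in a graph $H$ forces a copy of $K_m$ inside $\F(H)$. Applying this with $H=\F(G)$ and $m=n^{n-2}$ (which is finite and at least $3$ for $n\geq 3$) would give $K_{n^{n-2}}\subseteq \F(\F(G))=\F^2(G)$, as required. So everything reduces to exhibiting one long cycle in $\F(G)$.

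First I would embed $\F(K_n)$ as a subgraph of $\F(G)$. Let $W$ be the vertex set of the given $K_n$, and fix once and for all a set $R$ of edges of $G$, none of them inside $K_n$, such that $T\cup R$ is a maximal forest of $G$ for \emph{every} spanning tree $T$ of the $K_n$. Such an $R$ can be produced by contracting $W$ to a single vertex and taking a maximal forest of the contracted graph (whose existence is guaranteed by Lemma \ref{forest-extension}); since any spanning tree $T$ already spans and connects $W$, adjoining the fixed forest $R$ keeps the union acyclic and makes it maximal, and this works uniformly in $T$. Because $R$ is disjoint from $E(K_n)$ and independent of $T$, the symmetric difference of $T\cup R$ and $T'\cup R$ equals that of $T$ and $T'$; hence $T\cup R$ and $T'\cup R$ differ by exactly one edge in $G$ if and only if $T$ and $T'$ do. Thus $T\mapsto T\cup R$ is an isomorphism of $\F(K_n)=\T(K_n)$ onto a subgraph of $\F(G)$.

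It then remains to produce a cycle of length $n^{n-2}$ in the embedded copy of $\T(K_n)$, that is, a Hamilton cycle of $\T(K_n)$, since $\T(K_n)$ has exactly $n^{n-2}$ vertices. This is precisely the classical theorem that the tree graph of a connected graph admits a Hamilton circuit (Cummins \cite{cummins1966hamilton}, Shank \cite{shank1968note}); for $n=3$ the claim is trivial because $\T(K_3)=K_3$ is itself a triangle. Transporting this Hamilton cycle through the embedding yields a cycle of length $n^{n-2}$ in $\F(G)$, and Lemma \ref{cycle-kn} then completes the proof.

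The main obstacle I anticipate is the uniform extension step: one must check carefully that a single forest $R$, chosen after contracting all of $W$ to one vertex, simultaneously completes every spanning tree of $K_n$ to a maximal forest of $G$ while contributing no edge inside $K_n$, so that adjacency in $\F(K_n)$ is faithfully reproduced inside $\F(G)$. Everything else is either routine bookkeeping or a direct appeal to the Cummins--Shank Hamiltonicity of tree graphs together with Lemma \ref{cycle-kn}.
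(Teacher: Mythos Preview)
Your argument is correct and follows essentially the same route as the paper's proof: embed $\T(K_n)$ into $\F(G)$ via a fixed forest $R$ of edges outside $K_n$, invoke Cummins' Hamiltonicity of tree graphs to obtain a $C_{n^{n-2}}$ in $\F(G)$, and then apply Lemma~\ref{cycle-kn}. The only cosmetic difference is that the paper produces $R$ by extending one particular spanning tree $T_0$ of the $K_n$ to a maximal forest $F_{T_0}$ of $G$ and setting $R=F_{T_0}\setminus E(T_0)$, whereas you obtain $R$ by contracting $W$; both constructions yield a uniform extension and the rest of the proof is identical.
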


\begin{proof}
Cayley's formula states that $K_n$ has $n^{n-2}$ spanning trees.  Cummins \cite{cummins1966hamilton} proved that the tree graph of a finite connected graph is Hamiltonian.  Therefore, $\F(K_n)$ contains $C_{n^{n-2}}$.  
Let $F_{T_0}$ be a spanning tree of $G$ that extends one of the spanning trees $T_0$ of the $K_n$ subgraph.  Replacing the edges of $T_0$ in $F_{T_0}$ by the edges of any other spanning tree $T$ of $K_n$, we have a spanning tree $F_T$ that contains $T$.  
The $F_T$'s for all spanning trees $T$ of $K_n$ are $n^{n-2}$ spanning trees of $G$ that differ only within $K_n$; thus, the graph of the $F_T$'s is the same as the graph of the $T$'s, which is Hamiltonian.  That is, $\F(G)$ contains $C_{n^{n-2}}$.  By Lemma \ref{cycle-kn}, $\F^2(G)$ contains $K_{n^{n-2}}$.
\end{proof}

We do not know exactly what graphs $\F(K_n)$ and $\F^2(K_n)$ are.

\begin{lemma}\label{triangles-k9}
If $G$ has two edge disjoint triangles, then $\F^2(G)$ contains $K_9$.
\end{lemma}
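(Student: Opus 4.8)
The plan is to build a cycle of length $9$ inside $\F(G)$ and then apply Lemma~\ref{cycle-kn} to the graph $\F(G)$, which immediately gives $K_9\subseteq\F(\F(G))=\F^2(G)$. Thus the entire problem reduces to exhibiting a $C_9$ in $\F(G)$, and I would manufacture one by performing single-edge exchanges independently in the two given triangles.

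First I would record the structural fact that two edge-disjoint triangles share at most one vertex: if they shared two vertices, they would share the edge between them. Write the triangles as $C$ with edge set $\{e_1,e_2,e_3\}$ and $C'$ with edge set $\{f_1,f_2,f_3\}$. Each two-edge path $C-e_i$ spans the three vertices of $C$, and similarly $C'-f_j$ spans those of $C'$. Since the triangles meet in at most one vertex, the union $(C-e_1)\cup(C'-f_1)$ is acyclic (a single tree if they share a vertex, two vertex-disjoint paths otherwise), so by Lemma~\ref{forest-extension} it is contained in some maximal forest $F$ of $G$.

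Next, for each $(i,j)\in\{1,2,3\}^2$ I would let $F_{ij}$ be the subgraph obtained from $F$ by replacing the path $C-e_1$ with $C-e_i$ and the path $C'-f_1$ with $C'-f_j$. Exactly as in the proof of Lemma~\ref{cycle-kn}, replacing a spanning path of a triangle by another spanning path of the same triangle carries a maximal forest to a maximal forest, the added edge creating no cycle because $F$ is acyclic and the unique path realizing the relevant connection ran through the deleted edge. As the two triangles are edge-disjoint, the two replacements do not interfere, so each $F_{ij}$ is a maximal forest; and the nine forests are distinct because they restrict differently to the edges of the triangles. A direct comparison shows that $F_{ij}$ and $F_{i'j'}$ differ by exactly one edge precisely when they agree in one index and differ in the other, and by two edges otherwise; thus (using Lemma~\ref{l2.3}) the nine forests induce in $\F(G)$ the $3\times 3$ rook's graph, the Cartesian product of two triangles.

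Finally I would exhibit an explicit Hamilton cycle of that rook's graph, for instance
\[
F_{11},\,F_{12},\,F_{13},\,F_{23},\,F_{21},\,F_{22},\,F_{32},\,F_{33},\,F_{31},\,F_{11},
\]
in which each consecutive pair agrees in one index and differs in the other, hence differs by exactly one edge and is adjacent in $\F(G)$. This is a cycle of length $9$ in $\F(G)$, and Lemma~\ref{cycle-kn} applied to $\F(G)$ yields $K_9\subseteq\F^2(G)$. The only genuine obstacle is the verification that the edge exchanges in the two triangles are simultaneously valid and independent; the ``share at most one vertex'' observation reduces this to the single-triangle exchange already justified in Lemma~\ref{cycle-kn}, after which the construction of the $9$-cycle is routine.
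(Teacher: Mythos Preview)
Your proof is correct and follows essentially the same route as the paper's: both construct nine maximal forests $F_{ij}$ by independently choosing which edge to omit from each triangle, observe that these induce the Cartesian product $C_3\times C_3$ in $\F(G)$, extract a $9$-cycle from that product, and invoke Lemma~\ref{cycle-kn}. You supply more detail than the paper does---the explicit Hamilton cycle and the verification that the initial four-edge forest is acyclic---but the strategy is identical.
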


\begin{proof}
Suppose that $G$ has two edge disjoint triangles whose edges are $e_1,e_2,e_3$ and $f_1,f_2,f_3$, respectively.  The union of the triangles has exactly 9 maximal forests $F_{ij}'$, obtained by deleting one $e_i$ and one $f_j$ from the triangles.  Extend $F_{11}'$ to a maximal forest $F_{11}$ and let $F_{ij}$ be the maximal forest $F_{11}\setminus E(F_{11}') \cup F_{ij}$, for each $i,j=1,2,3$.  The nine maximal forests $F_{ij}'$, and consequently the maximal forests $F_{ij}$ in $\F(G)$, form a Cartesian product graph $C_3 \times C_3$, which contains a cycle of length 9.  By Lemma \ref{cycle-kn}, $\F^2(G)$ contains $K_9$.
\end{proof}


We now show that repeated application of the forest graph operator to many graphs creates larger and larger complete subgraphs.

\begin{lemma}\label{l2.2}
If $G$ has a cycle of (finite) length $n$ with $n\geq4$ or it has two edge disjoint triangles, then for any finite $m\geq1$, $\F^{m}(G)$ contains $K_{m^{2}}$.
\end{lemma}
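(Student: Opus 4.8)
The plan is to bootstrap from the earlier lemmas into a fast-growing induction on $m$. The single most important arithmetical fact is that the map $N \mapsto \lfloor N^2/4\rfloor$ coming from Lemma \ref{l2.1} has $N=4$ as a fixed point (since $\lfloor 16/4\rfloor = 4$) but is strictly increasing once $N \geq 5$; moreover, for a clique of size $m^2$ with $m \geq 3$ a single application already overshoots $(m+1)^2$, because $\lfloor m^4/4\rfloor \geq (m+1)^2$ holds for $m \geq 3$ (at $m=3$ this reads $20 \geq 16$). Consequently, once I establish the two seed facts
\[
\F^2(G) \supseteq K_4 \quad\text{and}\quad \F^3(G) \supseteq K_9,
\]
the statement follows by induction on $m$: the cases $m=1$ (trivially $K_1 \subseteq \F(G)$, which is nonempty since every graph has a maximal forest), $m=2$, and $m=3$ are exactly the base cases, while for $m \geq 3$ one application of Lemma \ref{l2.1} to $K_{m^2} \subseteq \F^m(G)$ gives $K_{\lfloor m^4/4\rfloor} \subseteq \F^{m+1}(G)$, and $\lfloor m^4/4\rfloor \geq (m+1)^2$ closes the step.

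First I would reduce each hypothesis to a large clique appearing after one or two applications of $\F$. If $G$ has a cycle of length $n \geq 4$, Lemma \ref{cycle-kn} gives $K_n \subseteq \F(G)$. When $n \geq 5$, one application of Lemma \ref{l2.1} gives $K_{\lfloor n^2/4\rfloor} \subseteq \F^2(G)$ with $\lfloor n^2/4\rfloor \geq 6$, and a further application gives a clique of size at least $\lfloor 36/4\rfloor = 9$ in $\F^3(G)$; so both seeds hold. If instead $G$ has two edge-disjoint triangles, Lemma \ref{triangles-k9} gives $K_9 \subseteq \F^2(G)$ directly, whence one more application of Lemma \ref{l2.1} yields $K_{\lfloor 81/4\rfloor}=K_{20} \subseteq \F^3(G)$; again both seeds hold.

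The delicate case is a cycle of length exactly $n=4$, where $K_4 \subseteq \F(G)$ and iterating Lemma \ref{l2.1} alone stays frozen at $K_4$ because of the fixed point noted above. Here I would invoke Lemma \ref{k-cayley} once to escape: since $\F(G)$ contains $K_4$, that lemma (applied with $\F(G)$ in the role of the ambient graph) gives $K_{4^{4-2}}=K_{16} \subseteq \F^2(\F(G)) = \F^3(G)$, supplying the second seed; the first seed $\F^2(G) \supseteq K_4$ comes from a single application of Lemma \ref{l2.1} to $K_4 \subseteq \F(G)$.

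I expect the main obstacle to be precisely this escape from the fixed point at $K_4$: the naive induction using only Lemma \ref{l2.1} fails for the $4$-cycle, and the crux is recognizing that $m=2$ and $m=3$ (rather than $m=1$) are the genuinely binding base cases and that the Cayley/Hamiltonicity boost of Lemma \ref{k-cayley} is exactly what clears them. Once those base cases are secured, the doubly exponential growth of the clique sizes trivially outruns the polynomial target $m^2$, so the remaining induction is routine.
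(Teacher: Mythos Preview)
Your proposal is correct and follows essentially the same architecture as the paper's proof: establish the base cases $m=1,2,3$ using Lemmas \ref{cycle-kn}, \ref{l2.1}, \ref{k-cayley}, and \ref{triangles-k9}, then induct for $m\geq3$ via Lemma \ref{l2.1} and the inequality $\lfloor m^4/4\rfloor \geq (m+1)^2$. The only cosmetic difference is that the paper treats all $n\geq4$ uniformly by passing through $K_4\subseteq\F(G)$ and invoking Lemma \ref{k-cayley} once, whereas you split off $n\geq5$ and handle it with two applications of Lemma \ref{l2.1} alone; your explicit identification of the fixed point at $K_4$ as the reason Lemma \ref{k-cayley} is needed is a nice clarification that the paper leaves implicit.
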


\begin{proof}
We prove this lemma by induction on $m$. 

\textbf{Case 1:}  Suppose that $G$ has a cycle $C_n$ of length $n$ ($n \geq 4$, $n$ finite).  
By Lemma \ref{cycle-kn}, $\F(G)$ contains $K_{n}$ as a subgraph, which implies that $\F(G)$ contains $K_4$.  
By Lemma \ref{k-cayley}, $\F^3(G)$ contains $K_{16}$ and in particular it contains $K_{3^2}$.

\textbf{Case 2:}  Suppose that $G$ has two edge disjoint triangles.
By Lemma \ref{triangles-k9} $\F^2(G)$ contains $K_{9}$ as a subgraph.  
It follows by Lemma \ref{l2.1} that $\F^{3}(G)$ contains $K_{\lfloor{9^2}/{4} \rfloor}=K_{20}$ as a subgraph. This implies that $\F^{3}(G)$ contains $K_{3^2}$ as a subgraph.

By Cases 1 and 2 it follows that the result is true for $m = 1, 2, 3$. 
Let us assume that the result is true for $m=l\geq3$, i.e., that $\F^{l}(G)$ contains $K_{l^{2}}$ as a subgraph.  
By Lemma \ref{l2.1} it follows that $\F(\F^{l}(G))$ has a subgraph $K_{\lfloor{l^4}/{4}\rfloor}$. Since ${\lfloor{l^4}/{4}\rfloor} > (l+1)^2$, it follows that $\F^{l+1}(G)$ contains $K_{(l+1)^2}$. By the induction hypothesis $\F^{m}(G)$ contains $K_{m^{2}}$ for any finite $m\geq1$.
\end{proof}

With Lemma \ref{k-cayley} it is clearly possible to prove a much stronger lower bound on complete subgraphs of iterated forest graphs, but Lemma \ref{l2.2} is good enough for our purposes.


\begin{lemma}\label{fg-order}
A forest graph that is not $K_1$ has no isolated vertices and no isthmi.
\end{lemma}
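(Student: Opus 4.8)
The plan is to first translate the hypothesis $\F(G)\neq K_1$ into a structural statement about $G$, and then dispatch the two conclusions separately by edge exchange. The key observation is that $\F(G)\neq K_1$ holds precisely when $G$ contains a cycle: if $G$ is acyclic then every edge is an isthmus, $G$ is its own unique maximal forest, and $\F(G)=K_1$; conversely, if $G$ has a cycle $C$, then a maximal forest extending the path $C-e$ (which exists by Lemma~\ref{forest-extension}) omits $e$ but contains the other cycle edges, so deleting two different edges of $C$ produces two distinct maximal forests and $|\N(G)|\geq 2$. Hence under the hypothesis I may fix a cycle $C_0$ in $G$.

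To show $\F(G)$ has no isolated vertices, I would take an arbitrary maximal forest $F$ and exhibit a neighbour. Since $F$ is acyclic it cannot contain all of $C_0$, so some edge $f$ of $C_0$ lies outside $F$. The endpoints of $f$ lie in a single component of $G$, on which $F$ is a spanning tree, so $F+f$ contains a unique fundamental cycle; deleting any edge $e\in F$ of that cycle yields a maximal forest $F-e+f$ adjacent to $F$ in $\F(G)$. Thus $F$ is not isolated.

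For the absence of isthmi, I would show that every edge of $\F(G)$ lies on a triangle. Given an edge $\{F_1,F_2\}$ with $F_2=F_1-e+f$, let $C$ be the fundamental cycle of $f$ with respect to $F_1$; because $F_2$ is acyclic, $e$ must lie on $C$, and since $G$ is simple, $C$ has length $k\geq 3$. Writing the edges of $C$ as $g_1,\dots,g_k$ and keeping the off-cycle edges of $F_1$ fixed, the $k$ maximal forests $H_i$ that omit exactly $g_i$ from $C$ (mirroring the construction in Lemma~\ref{cycle-kn}) form a $K_k$ in $\F(G)$, with $F_1$ and $F_2$ among its vertices. Any third vertex $H_l$ then completes a triangle through $\{F_1,F_2\}$, so this edge is not an isthmus.

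The exchange verifications are routine; the one point requiring care is that the fundamental cycles used here are finite, being paths in spanning trees between the two endpoints of a single edge. This finiteness is exactly what guarantees a genuine finite complete subgraph $K_k$ with $k\geq 3$, and hence a triangle, even when $G$ is infinite, so it is the main thing to keep in view throughout.
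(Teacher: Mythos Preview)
Your proposal is correct and follows essentially the same route as the paper: both arguments reduce ``not $K_1$'' to the existence of a cycle in the root graph, use the fundamental cycle of an exchanged edge to produce a neighbour of any given maximal forest, and show every edge of $\F(G)$ lies on a triangle via a third exchange along that same fundamental cycle. The only cosmetic difference is that you build the full $K_k$ on the fundamental cycle (invoking Lemma~\ref{cycle-kn}), whereas the paper is content to name a single third vertex $F''$; the underlying mechanism is identical.
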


\begin{proof}
Let $G=\F(H)$ for some graph $H$.  Consider a vertex $F$ of $G$, that is, a maximal forest in $H$.  
Let $e$ be an edge of $F$ that belongs to a cycle $C$ in $H$.  
Then there is an edge $f$ in $C$ that is not in $F$ and $F'=F-e+f$ is a second maximal forest that is adjacent to $F$ in $G$.  Since $C$ has length at least 3, it has a third edge $g$.  If $g$ is not in $F$, let $F''=F-e+g$.  If $g$ is in $F$, let $F''=F-g+f$.  In both cases $F''$ is a maximal forest that is adjacent to $F$ and $F'$.  Thus, $F$ is not isolated and the edge $FF'$ in $G$ is not an isthmus.  

Suppose $F,F'\in\N(H)$ are adjacent in $G$.  That means there are edges $e\in E(F)$ and $e'\in E(F')$ such that $F'=F-e+e'$.  Thus, $e$ belongs to the unique cycle in $F+e'$.  As shown above, there is an $F''\in\N(H)$ that forms a cycle with $F$ and $F'$.  Therefore the edge $FF'$ of $G$ is not an isthmus.

Let $F\in\N(H)$ be an isolated vertex in $G$.  If $H$ has an edge $e$ not in $F$, then $F+e$ contains a cycle so $F$ has a neighboring vertex in $G$, as shown above.  Therefore, no such $e$ can exist; in other words, $H=F$ and $G$ is $K_1$.
\end{proof}


\section{Basic Properties of an Infinite Forest Graph}\label{properties}

We now present a crucial foundation for the proof of the main theorem in Section \ref{convergence}.  The \emph{cyclomatic number} $\beta_1(G)$ of a graph $G$ can be defined as the cardinality $|E(G)\setminus E(F)|$ where $F$ is a maximal forest of $G$.

\begin{proposition}\label{2.8}
Let $G$ be a graph such that $|\mathfrak{C}(G)| = \beta$, an infinite cardinal number.  Then:
 \begin{description}
  \item[i)] $\beta_1(G) = \beta$ and $\beta_1(\F(G))=2^\beta$.  
  \item[ii)] Both the order of $\F(G)$ and its number of edges equal $2^{\beta}$.  Both the order and the number of edges of $G$ equal $\beta$, provided that $G$ has no isolated vertices and no isthmi.
  \item[iii)] $\F(G)$ is $\beta$-regular.  
  \item[iv)] The order of any connected component of $\F(G)$ is $\beta$, and it has exactly $\beta$ edges.  
  \item[v)]  $\F(G)$ has exactly $2^{\beta}$ components.  
  \item[vi)] Every component of $\F(G)$ has exactly $\beta$ cycles.  
  \item[vii)] $|\mathfrak{C}(\F(G))| = 2^{\beta}$.  
 \end{description}
\end{proposition}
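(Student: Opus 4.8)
The plan is to prove the seven items in the order (i)(first half), (ii), (iii), then (iv)--(vii), finally (i)(second half), feeding each cardinal computation into the next; the one genuinely hard point is the lower bound on the order of $\F(G)$ in (ii), everything else being counting and cardinal arithmetic. First I would prove $\beta_1(G)=\beta$ from the cycle space. Fix a maximal forest $F_0$ and let $D=E(G)\setminus E(F_0)$, so $\beta_1(G)=|D|$. Each $d\in D$ gives a distinct fundamental cycle, whence $|\mathfrak{C}(G)|\ge|D|$; conversely every (finite) cycle is a finite symmetric difference of fundamental cycles, so $|\mathfrak{C}(G)|$ is at most the number of finite subsets of $D$. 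Were $|D|$ finite, $|\mathfrak{C}(G)|$ would be finite, contradicting that $\beta$ is infinite; hence $|D|$ is infinite, the number of its finite subsets is $|D|$, and $\beta=|\mathfrak{C}(G)|=|D|=\beta_1(G)$. For the first half of (ii), since $G$ has no isthmi every edge lies on a finite cycle, so $|E(G)|\le|\mathfrak{C}(G)|\cdot\aleph_0=\beta$, while $|E(G)|\ge\beta_1(G)=\beta$; Lemma \ref{edge-count} then forces the order of $G$ to equal $\beta$ as well.

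For (iii) I would count the neighbours of a maximal forest $F$: they are exactly the $F-e+f$ with $f\notin F$ a non-isthmus edge and $e$ a tree edge of the finite fundamental cycle of $f$. There are $\beta_1(G)=\beta$ choices of $f$, each giving finitely many neighbours, so $\deg F\le\beta\cdot\aleph_0=\beta$; and distinct $f$ yield neighbours with distinct added edges, so $\deg F\ge\beta$. Thus $\F(G)$ is $\beta$-regular. Now the order of $\F(G)$ is $|\N(G)|$. Deleting isthmi and isolated vertices changes neither the cycles nor $\N(G)$, so I may assume $G$ has neither; then $|E(G)|=\beta$ gives the easy bound $|\N(G)|\le 2^{|E(G)|}=2^{\beta}$. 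The matching lower bound is the crux. I would build $\beta$ ``independent'' exchanges by transfinite recursion: well-order $D$ and greedily select non-tree edges $d_\xi$ together with tree edges $e_\xi\in C_{d_\xi}$ lying outside the union $U_\xi$ of all previously used fundamental cycles. Each fundamental cycle being finite, $|U_\xi|\le|\xi|\cdot\aleph_0$; and if the process halted after fewer than $\beta$ steps, every remaining non-tree edge would have both ends among the $\le|U_\xi|$ vertices met by $U_\xi$, bounding the remaining edges by $|U_\xi|^2=|U_\xi|<\beta$ and contradicting $|D|=\beta$. This produces pairs $(d_\xi,e_\xi)_{\xi<\kappa}$, $\kappa=|\beta|$, with $e_\xi\in C_{d_\xi}$ but $e_\xi\notin C_{d_\eta}$ for $\eta<\xi$, so the fundamental-circuit matrix is lower-triangular with unit diagonal. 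Hence for every $S\subseteq\kappa$ the set $F_0\mathbin{\triangle}\bigcup_{\xi\in S}\{d_\xi,e_\xi\}$ is again a maximal forest (each finite subsystem is an invertible triangular exchange, which for the finitary graphic matroid suffices). Since $e_\xi$ lies in this forest iff $\xi\notin S$, distinct $S$ give distinct maximal forests, yielding $2^{\kappa}=2^{\beta}$ of them. The size of $\F(G)$ is then $\tfrac12|\N(G)|\cdot\beta=2^{\beta}\cdot\beta=2^{\beta}$, finishing (ii).

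The remaining items rest on (ii) and (iii). For (iv), a connected $\beta$-regular graph has order at least $\beta$ (the neighbours of one vertex) and at most $\sum_{n}\beta^{n}=\aleph_0\cdot\beta=\beta$ (a breadth-first count), hence exactly $\beta$, and then $\beta$ edges by the handshake identity. For (v), if $N$ is the number of components then $N\cdot\beta=2^{\beta}$, and since $\beta<2^{\beta}$ this forces $N=2^{\beta}$. For (vi), each component has $\beta$ edges and so at most $\beta$ finite cycles; running the triangle construction of Lemma \ref{cycle-kn} on the $\beta$ fundamental cycles through a single vertex $F$ yields $\beta$ pairwise distinct triangles inside that component (distinguished by their added non-tree edge), giving exactly $\beta$. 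Summing over the $2^{\beta}$ components gives (vii): $|\mathfrak{C}(\F(G))|=2^{\beta}\cdot\beta=2^{\beta}$. Finally the second half of (i) follows by applying the already-proved first half of (i) to $\F(G)$, whose cycle set has cardinality $2^{\beta}$, so $\beta_1(\F(G))=2^{\beta}$.

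I expect the only serious obstacle to be the lower bound $|\N(G)|\ge2^{\beta}$. The tempting idea of extracting $\beta$ edge-disjoint cycles and choosing one edge to drop in each fails, because the choices interact globally: in $K_{2,\beta}$ the edge-disjoint $4$-cycles all share the two hub vertices, and reinstating one edge per cycle creates cycles rather than independent binary choices. The correct device is the triangular system of exchanges above, whose existence at full size $\beta$ needs the transfinite greedy argument. The delicate verification is that the \emph{infinite} symmetric differences $F_0\mathbin{\triangle}\bigcup_{\xi\in S}\{d_\xi,e_\xi\}$ are maximal forests, not merely acyclic; I would settle this by reducing to finite subsystems, using that the graphic matroid is finitary.
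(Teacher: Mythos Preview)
Your overall architecture and the arguments for (i), (iii)--(vii) are essentially the paper's.  The one substantive divergence is the lower bound $|\N(G)|\ge 2^{\beta}$ in (ii).  The paper does this much more cheaply: fix a maximal forest $F$ of $G$, then take a maximal forest $F_0$ of the graph $G\setminus E(F)$; since $G\setminus E(F)$ has $\beta_1(G)=\beta$ edges and (by Lemma~\ref{edge-count}) $\beta$ non-isolated vertices, $F_0$ has $\beta$ edges.  For each $A\subseteq E(F_0)$ extend the forest $A$ to a maximal forest $F_A$ of $F\cup A$ (Lemma~\ref{forest-extension}); because $F$ already spans every component of $G$, $F_A$ is a maximal forest of $G$, and $F_A\setminus E(F)=A$ distinguishes the $F_A$'s.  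This yields $2^{\beta}$ maximal forests with no transfinite recursion and no exchange verification at all.

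Your triangular-exchange construction is a genuine alternative and does produce $\beta$ pairs $(d_\xi,e_\xi)$, but your proposed verification has a gap: finitariness of the cycle matroid tells you only that \emph{dependence} is witnessed finitely, so it reduces \emph{acyclicity} of $F_S$ to finite $S$.  It does \emph{not} reduce \emph{maximality} to finite $S$; spanning is a cofinitary rather than finitary condition.  Concretely, to see that $e_\xi\in\operatorname{span}(F_S)$ you cannot simply cite the finite case, because $C_{d_\xi}$ may contain $e_\eta$'s with $\eta<\xi$ that were removed.  What does work is a transfinite induction on $\xi$ showing $e_\xi\in\operatorname{span}(F_S)$ from $C_{d_\xi}\setminus\{e_\xi\}\subseteq\operatorname{span}(F_S)$ (using that every $e_\eta\in C_{d_\xi}$ has $\eta<\xi$ by your triangularity), after which $F_0\subseteq\operatorname{span}(F_S)$ and maximality follows.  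So your route can be completed, but it is longer than advertised; the paper's double-forest trick sidesteps the whole issue.
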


\begin{proof}
Let $G$ be a graph with $|\mathfrak{C}(G)|$ = $\beta$ ($\beta$ infinite).  

i)  Let $F$ be a maximal forest of $G$.  The number of cycles in $G$ is not more than the number of finite subsets of $E(G)\setminus E(F)$.  This number is finite if $E(G)\setminus E(F)$ is finite, but it cannot be finite because $|\mathfrak{C}(G)|$ is infinite.  
Therefore $E(G)\setminus E(F)$ is infinite and the number of its finite subsets equals $|E(G)\setminus E(F)|=\beta_1(G)$.  Thus, $\beta_1(G) \geq |\mathfrak{C}(G)|$.  
The number of cycles is at least as large as the number of edges not in $F$, because every such edge makes a different cycle with $F$.  Thus, $|\mathfrak{C}(G)| \geq \beta_1(G)$.  
It follows that $\beta_1(G) = |\mathfrak{C}(G)| = \beta$.  Note that this proves $\beta_1(G)$ does not depend on the choice of $F$.

The value of $\beta_1(\F(G))$ follows from this and part (vii).

ii) For the first part, let $F$ be a maximal forest of $G$ and let $F_0$ be a maximal forest of $G\setminus E(F)$.  
As $G\setminus E(F)$ has $\beta_1(G)=\beta$ edges by part (i), it has $\beta$ non-isolated vertices by Lemma \ref{edge-count}.  $F_0$ has the same non-isolated vertices, so it too has $\beta$ edges.

Any edge set $A \subseteq F_0$ extends to a maximal forest $F_A$ in $F\cup A$.  Since $F_A\setminus F=A$, the $F_A$'s are distinct.  Therefore, there are at least $2^\beta$ maximal forests in $F_0 \cup F$.  The maximal forest $F$ consists of a spanning tree in each component of $G$; therefore, the vertex sets of components of $F$ are the same as those of $G$, and so are those of $F_0 \cup F$.  Therefore, a maximal forest in $F_0 \cup F$, which consists of a spanning tree in each component of $F_0 \cup F$, contains a spanning tree of each component of $G$.  

We conclude that a maximal forest in $F_0 \cup F$ is a maximal forest of $G$ and hence that there are at least $2^\beta$ maximal forests in $G$, i.e., $|\N(G)| \geq 2^{\beta}$. 
Since $G$ is a subgraph of $K_{\beta}$, and since $|\N(K_{\beta})| = 2^{\beta}$ by Lemma \ref{l2.5}, we have $|\N(G)|\leq 2^{\beta}$.  
Therefore $|\N(G)|=2^{\beta}$. That is, the order of $\F(G)$ is $2^{\beta}$.  By Lemmas \ref{fg-order} and \ref{edge-count}, that is also the number of edges of $\F(G)$.

For the second part, note that $G$ has infinite order or else $\beta_1(G)$ would be finite.  If $G$ has no isolated vertices and no isthmi, then $|V(G)|=|E(G)|$ by Lemma \ref{edge-count}.  By part (i) there are $\beta$ edges of $G$ outside a maximal forest; hence $\beta\leq|E(G)|$.  

Since every edge of $G$ is in a cycle, by the axiom of choice we can choose a cycle $C(e)$ containing $e$ for each edge $e$ of $G$.  Let $\mathfrak{C}=\{C(e) : e \in E(G)\}$.  The total number of pairs $(f,C)$ such that $f\in C\in \mathfrak{C}$ is no more than $\aleph_0.|\mathfrak{C}| \leq \aleph_0.|\mathfrak{C}(G)| = \aleph_0.\beta = \beta$.  This number of pairs is not less than the number of edges, so $|E(G)|\leq\beta$.
It follows that $G$ has exactly $\beta$ edges.

iii) Let $F$ be a maximal forest of $G$.  By part (i), $|E(G)\setminus E(F)|=\beta$. 
By adding any edge $e$ from $E(G)\setminus E(F)$ to $F$ we get a cycle $C$.  Removing any edge other than $e$ from the cycle $C$ gives a new maximal forest which differs by exactly one edge with $F$.  The number of maximal forests we get in this way is $\beta_1(G)$ because there are $\beta_1(G)$ ways to choose $e$ and a finite number of edges of $C$ to choose to remove, and $\beta_1(G)$ is infinite.  Thus we get $\beta$ maximal forests of $G$, each of which differs by exactly one edge with $F$.  Every such maximal forest is generated by this construction.  Therefore, the degree of any vertex in $\F(G)$ is $\beta$.

iv) Let $A$ be a connected component of $\F(G)$.  As $\F(G)$ is $\beta$-regular by part (iii), it follows that $|V(A)|\geq \beta$.  
Fix a vertex $v$ in $A$ and define the $n^{\text{th}}$ neighborhood $D_n=\{v':d(v,v')=n\}$ for each $n$ in $\mathbb{N}$.  
Since every vertex has degree $\beta$, $|D_0|=1$, $|D_1|=\beta$ and $|D_k|\leq \beta|D_{k-1}|$. Thus, by induction on $n$, $|D_n|\leq \beta$ for $n>0$. 

Since $A$ is connected, it follows that $V(A)=\bigcup_{i\in \mathbb{N}\cup\{0\}}D_i$, i.e., $V(A)$ is the countable union of sets of order $\beta$.  Therefore $|A|=\beta$, as $|\mathbb{N}|.\beta'=\beta'$. Hence any connected component of $\F(G)$ has $\beta$ vertices.  By Lemma \ref{edge-count} it has $\beta$ edges.

v) By parts (ii, iv) the order of $\F(G)$ is $2^{\beta}$ and the order of each component of $\F(G)$ is $\beta$.  
Since $|\F(G)|=2^{\beta}$, $\F(G)$ has at most $2^{\beta}$ components.  
Suppose that $\F(G)$ has $\beta'$ components where $\beta'<2^{\beta}$.  As each component has $\beta$ vertices, it follows that $\F(G$) has order at most $\beta'.\beta=\max\{\beta',\beta \}$. 
This is a contradiction to part (ii).  Therefore $\F(G)$ has exactly $2^{\beta}$ components.

vi) Let $A$ be a component of $\F(G)$.  Since it is infinite, by part (iv) it has exactly $\beta$ edges.  
Suppose that $|\mathfrak{C}(A)|=\beta'$.  
Then $\beta'$ is at most the number of finite subsets of $E(A)$, which is $\beta$ since $|E(A)|=\beta$ is infinite; that is, $\beta'\leq\beta$.  
By the argument in part (iii) every edge of $\F(G)$ lies on a cycle.  The length of each cycle is finite.  Thus $A$ has at most $\aleph_0.\beta'=\max \{ \beta', \aleph_0 \}=\beta'$ edges if $\beta'$ is infinite and it has a finite number of edges if $\beta'$ is finite.  Since $|E(A)|=\beta$, which is infinite, $\beta'\geq\beta$.  We conclude that $\beta'=\beta$.

vii) By parts (v, vi) $\F(G)$ has $2^{\beta}$ components and each component has $\beta$ cycles.  Since every cycle is contained in a component, $|\mathfrak{C}(\F(G))| = \beta.2^{\beta}=2^{\beta}$.
\end{proof}


From the above proposition it follows that an infinite graph cannot be a forest graph unless every component has the same infinite order $\beta$ and there are $2^{\beta}$ components.  A consequence is that the infinite graph itself must have order $2^{\beta}$.  Hence, 

\begin{corollary}\label{not2power}
Any infinite graph whose order is not a power of $2$, including $\aleph_0$ and all other limit cardinals, is not a forest graph.
\end{corollary}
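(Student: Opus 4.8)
The plan is to prove the statement in its contrapositive form: \emph{if $H$ is an infinite forest graph, then its order is a power of $2$}, more precisely $|H|=2^{\beta}$ for some infinite cardinal $\beta$. So I would write $H=\F(G)$ for some graph $G$ and argue about $G$.

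First I would reduce to the case governed by Proposition \ref{2.8}, namely $|\mathfrak{C}(G)|$ infinite. If instead $G$ had only finitely many cycles, then the non-isthmus edges of $G$ — which are exactly the edges lying on some cycle — would form a finite set $S$, since there are finitely many cycles and each is finite. Every maximal forest of $G$ contains all isthmi and is completely determined by its trace $F\cap S$, so $|\N(G)|\leq 2^{|S|}$ would be finite and $H$ could not be infinite. Hence $\beta:=|\mathfrak{C}(G)|$ is infinite, and Proposition \ref{2.8}(ii) gives that the order of $H=\F(G)$ equals $2^{\beta}$. This establishes the displayed claim, and the corollary's principal assertion (order not a power of $2$ implies not a forest graph) is simply its contrapositive.

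It then remains to confirm that the cardinals named in the statement are genuinely excluded, i.e.\ are never of the form $2^{\beta}$ with $\beta$ infinite. For $\aleph_0$ this is elementary: $2^{n}$ is finite for finite $n$, while $2^{\beta}>\beta\geq\aleph_0$ for infinite $\beta$ by the fact that $\beta<2^{\beta}$, so $\aleph_0\neq 2^{\gamma}$ for any $\gamma$. The limit-cardinal case is the only point requiring genuine set theory, and it is where I expect the main obstacle to lie, since the elementary arithmetic (1)--(4) of the preliminaries does not suffice. Here I would invoke K\"onig's theorem, which yields $\mathrm{cf}(2^{\beta})>\beta\geq\aleph_0$ for every infinite $\beta$; thus every power of $2$ has uncountable cofinality. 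Consequently any cardinal of cofinality $\aleph_0$ — in particular $\aleph_0$ itself and limit cardinals such as $\aleph_{\omega}$ — cannot equal $2^{\beta}$, and so no infinite graph of such order can be a forest graph. It is worth remarking that this cofinality bound is precisely what makes the list of excluded cardinals sharp, cofinality being the true invariant at work.
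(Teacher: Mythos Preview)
Your main argument is exactly the paper's: an infinite forest graph $\F(G)$ forces $\beta=|\mathfrak{C}(G)|$ to be infinite, whence Proposition~\ref{2.8}(ii) gives order $2^{\beta}$. The paper states this conclusion in the two sentences preceding the corollary without spelling out the reduction; your argument that finitely many cycles forces only finitely many non-isthmus edges and hence finitely many maximal forests is correct and is essentially Corollary~\ref{l2.14}(iii)$\Rightarrow$(ii).

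Where you go beyond the paper is in the parenthetical about $\aleph_0$ and limit cardinals: the paper offers no justification there at all, while you invoke K\"onig's inequality $\mathrm{cf}(2^{\beta})>\beta$. Your closing remark is in fact sharper than the paper's claim. The cofinality bound rules out $\aleph_0$ and every cardinal of countable cofinality (so $\aleph_{\omega}$, $\aleph_{\omega\cdot 2}$, etc.), but it does \emph{not} exclude every limit cardinal, and one cannot do so in ZFC alone: it is consistent that $2^{\aleph_0}=\aleph_{\omega_1}$, a limit cardinal of cofinality $\omega_1$. So the paper's phrase ``all other limit cardinals'' overreaches as a ZFC assertion, and your identification of cofinality as the genuine obstruction is the correct refinement.
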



\begin{corollary}\label{l2.14}
For a graph $G$ the following statements are equivalent.  
 \begin{description}
  \item[i)]   $\F(G)$ is connected.
  \item[ii)]  $\F(G)$ is finite.
  \item[iii)] The union of all cycles in $G$ is a finite graph.
 \end{description}
\end{corollary}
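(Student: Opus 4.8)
My plan is to funnel all three statements through the single arithmetic condition
\[
(\star)\qquad |\mathfrak{C}(G)|\ \text{is finite,}
\]
showing each of (i), (ii), (iii) to be equivalent to $(\star)$; the genuinely infinite content is then supplied entirely by Proposition \ref{2.8} and Corollary \ref{l2.4}. First I would set up the dictionary between (iii) and $(\star)$. Write $U$ for the union of all cycles of $G$, and note that $E(U)$ is exactly the set of non-isthmus edges. If $U$ is finite it has finitely many edges, hence finitely many edge-subsets, and so only finitely many cycles, which gives $(\star)$. Conversely, if $U$ is infinite then it has infinitely many edges---every vertex of $U$ lies on a cycle and thus has degree at least $2$ in $U$, so infinitely many vertices would force infinitely many edges---and since each cycle is finite, covering infinitely many edges requires infinitely many cycles; hence $|\mathfrak{C}(G)|$ is infinite. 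Thus (iii) $\Leftrightarrow (\star)$.

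Next I would deduce (i) and (ii) from $(\star)$. For (ii): under $(\star)$ the graph $U$ is finite by the dictionary, and since every isthmus lies in every maximal forest while the non-isthmus edges are precisely $E(U)$, each maximal forest equals the union of all isthmi with a subset of the finite set $E(U)$ and is determined by that subset. Hence $|\N(G)|\le 2^{|E(U)|}<\infty$ and $\F(G)$ is finite. For (i): the inequality $\beta_1(G)\le|\mathfrak{C}(G)|$ holds for every maximal forest $F$ (each edge outside $F$ makes a distinct cycle with $F$, as in the proof of Proposition \ref{2.8}(i)), so $(\star)$ forces $\beta_1(G)$ to be finite. Then for any two maximal forests $F_1,F_2$ the set $E(F_2)\setminus E(F_1)$ is contained in $E(G)\setminus E(F_1)$ and so has at most $\beta_1(G)$ edges; by Lemma \ref{l2.3} the two forests differ by finitely many edges, and Corollary \ref{l2.4} yields that $\F(G)$ is connected.

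For the converses I would argue by contraposition using Proposition \ref{2.8}. Assume $\neg(\star)$, i.e.\ $|\mathfrak{C}(G)|=\beta$ is infinite. Then Proposition \ref{2.8}(v) gives that $\F(G)$ has $2^{\beta}\ge 2$ components, so $\F(G)$ is disconnected and (i) fails; and Proposition \ref{2.8}(ii) gives that $\F(G)$ has order $2^{\beta}$, so $\F(G)$ is infinite and (ii) fails. This establishes (i) $\Rightarrow(\star)$ and (ii) $\Rightarrow(\star)$, completing the three equivalences. The step I expect to be most delicate is the passage between the geometric condition (iii) and the cardinal condition $(\star)$: in the direction $(\star)\Rightarrow$(ii) one must observe that a maximal forest is determined entirely by its non-isthmus edges (every isthmus being forced into it), so that finiteness of the cycle-union bounds the number of maximal forests, and in the reverse direction one must use the finiteness of individual cycles to convert ``infinitely many edges on cycles'' into ``infinitely many cycles.'' All the genuinely infinite combinatorics is already encapsulated in Proposition \ref{2.8}, so what remains is elementary once the isthmus/non-isthmus dichotomy and Lemma \ref{l2.3} are in hand.
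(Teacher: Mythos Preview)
Your proof is correct and complete. The organizing device of the auxiliary condition $(\star)$ is clean, and each of the four implications you prove between $(\star)$ and (i)--(iii) checks out; in particular the argument that $\beta_1(G)\le|\mathfrak{C}(G)|$ via fundamental cycles and the subsequent use of Lemma~\ref{l2.3} and Corollary~\ref{l2.4} to get connectedness is valid.

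The paper argues the cycle (i)$\Rightarrow$(iii)$\Rightarrow$(ii)$\Rightarrow$(i) instead of routing everything through $(\star)$. The first two implications are essentially the same as yours, but for (ii)$\Rightarrow$(i) the paper takes a different and less elementary route: it uses Whitney vertex identification to reduce to a connected $G$, so that $\F(G)=\T(G)$, and then invokes Cummins' theorem that the tree graph of a finite connected graph is Hamiltonian, hence connected. Your argument replaces this external citation with the internal tools Lemma~\ref{l2.3} and Corollary~\ref{l2.4}, which makes the proof self-contained within the paper's own development; the trade-off is that you prove both directions for each of (i) and (ii) separately rather than closing a single cycle of implications.
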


\begin{proof}
(i)$\implies$(iii).  Suppose that $\F(G)$ is connected. If $G$ has infinitely many cycles then by Proposition \ref{2.8}(v) $\F(G)$ is disconnected. Therefore $G$ has finitely many cycles. 
Let $A = \{ e \in E(G) : \text{edge $e$ lies on a cycle in } G\}$.  Then $|A|$ is finite because the length of each cycle is finite.  That proves (iii).  

(iii)$\implies$(ii).  As every maximal forest of $G$ consists of a maximal forest of $A$ and all the edges of $G$ which are not in $A$, $G$ has at most $2^n$ maximal forests where $n=|A|$. Hence $\F(G)$ has a finite number of vertices and consequently is finite.

(ii)$\implies$(i).  By identifying vertices in different components (Whitney vertex identification; see Section \ref{roots}) we can assume $G$ is connected so $\F(G)=\T(G)$.  Cummins \cite{cummins1966hamilton} proved that the tree graph of a finite graph is Hamiltonian; therefore it is connected.
\end{proof}


\section{$\F$-Roots}\label{roots}

In this section we establish properties of $\F$-roots of graphs.  We begin with the question of what an $\F$-root should be.


Since any graph $H'$ that is isomorphic to an $\F$-root $H$ of $G$ is immediately also an $\F$-root, the number of non-isomorphic $\F$-roots is a better question than the number of labeled $\F$-roots.  
We now show in some detail that a still better question is the number of non-isomorphic $\F$-roots without isthmi.  

Let $t_\beta$ be the number of non-isomorphic rooted trees of order $\beta$.  We note that $t_{\aleph_0}\geq2^{\aleph_0}$, by a construction of Reinhard Diestel (personal communication, July 10, 2015).  
(We do not know a corresponding lower bound on $t_\beta$ for $\beta>\aleph_0$.)
Let $P$ be a one-way infinite path whose vertices are labelled by natural numbers, with root 1; choose any subset $S$ of $\mathbb{N}$ and attach two edges at every vertex in $S$, forming a rooted tree $T_S$ (rooted at 1).  Then $S$ is determined by $T_S$ because the vertices in $S$ are those of degree at least 3 in $T_S$.  (If $2\in S$ but $1 \notin S$, then vertex $1$ is determined only up to isomorphism by $T_S$, but $S$ itself is determined uniquely.)  The number of sets $S$ is $2^{\aleph_0}$, hence $t_{\aleph_0}\geq2^{\aleph_0}$.  

\begin{proposition}\label{l4.1}
Let $G$ be a graph with an $\F$-root of order $\alpha$. If $\alpha$ is finite, then $G$ has infinitely many non-isomorphic finite $\F$-roots.  
If $\alpha$ is finite or infinite, then $G$ has at least $t_\beta$ non-isomorphic $\F$-roots of order $\beta$ for every infinite $\beta \geq \alpha$.
\end{proposition}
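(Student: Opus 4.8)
The plan is to exploit the observation, implicit in the preamble, that attaching isthmi (pendant trees) to an $\F$-root does not change the forest graph it produces. The key point is that every maximal forest of $G$ must contain all isthmi of $G$, and two maximal forests differ by exactly one edge if and only if their \emph{non-isthmus} parts differ by exactly one edge. Consequently, if $H$ is an $\F$-root of $G$, and we construct a new graph $H'$ by attaching any forest (a disjoint union of trees) to $H$ along isthmi---for instance, by picking a vertex of $H$ and hanging a tree from it, or by adding new tree-components---then the maximal forests of $H'$ are in bijection with those of $H$, via adjoining the fixed isthmus edges, and this bijection preserves the ``differ by one edge'' adjacency. Hence $\F(H')\cong\F(H)\cong G$, so $H'$ is again an $\F$-root of $G$.

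First I would make precise the bijection $\N(H)\to\N(H')$ and verify it is an isomorphism of forest graphs, using Lemma~\ref{forest-extension} to see that every maximal forest of $H'$ restricts to a maximal forest of $H$ together with the full set of added isthmi. For the finite case, I would then take the $\F$-root $H$ of order $\alpha$ and, for each $k\geq1$, form $H_k$ by attaching a path of length $k$ (or $k$ pendant isthmi) to $H$; these are pairwise non-isomorphic finite graphs, all $\F$-roots of $G$, giving infinitely many non-isomorphic finite $\F$-roots. This is the easy half.

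For the infinite half, the idea is to attach an infinite rooted tree. Given any infinite cardinal $\beta\geq\alpha$ and any rooted tree $T$ of order $\beta$, I would form $H_T$ by identifying the root of $T$ with a fixed vertex of $H$ (Whitney vertex identification, as referenced in the text). The added tree $T$ contributes only isthmi to $H_T$, so $\F(H_T)\cong\F(H)\cong G$ as above, and $H_T$ has order $\alpha+\beta=\beta$. The remaining task is to show that non-isomorphic choices of $T$ yield non-isomorphic $H_T$. This is where the parameter $t_\beta$ enters: the number of non-isomorphic rooted trees of order $\beta$ is $t_\beta$, and I would argue that the rooted tree $T$ can be recovered from $H_T$ up to isomorphism---for example because $T$ is (essentially) the set of isthmus-edges and pendant structure attached at the identification vertex, which is determined by $H_T$ as an abstract graph once the finite ``cyclic core'' $H$ is stripped away.

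The main obstacle will be this last recovery step: distinct rooted trees $T$ could conceivably produce isomorphic attached graphs $H_T$, since the root is absorbed into $H$ and loses its distinguished status, and an automorphism of $H_T$ might mix the attached tree with parts of $H$. I would handle this by choosing the attachment so that the trees are distinguishable---e.g., by a degree or subtree argument showing that the attached tree is an invariantly defined piece of $H_T$ (the union of isthmi together with the components that become trees), so that an isomorphism $H_T\cong H_{T'}$ forces an isomorphism of rooted trees $T\cong T'$. Since the number of rooted trees of order $\beta$ that can be so distinguished is at least $t_\beta$, this yields the claimed lower bound of $t_\beta$ non-isomorphic $\F$-roots of order $\beta$.
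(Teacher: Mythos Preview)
Your approach is essentially the paper's: attach trees to a given $\F$-root $H$ by vertex identification, observe this does not change the forest graph, and then argue that non-isomorphic rooted trees of order $\beta$ give non-isomorphic graphs $H_T$.

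The obstacle you correctly flag---recovering $T$ from $H_T$---is resolved in the paper by a preliminary reduction you omit: since deleting all isthmi and isolated vertices from an $\F$-root yields another $\F$-root of $G$ (of possibly smaller order), one may assume at the outset that $H$ has no isthmi and no isolated vertices (unless $H=K_1$, handled separately). Under that assumption the isthmi of $H_T$ are \emph{exactly} the edges of $T$, and the root of $T$ is the unique vertex of $T$ incident with a non-isthmus of $H_T$; so the rooted tree $T$ is determined by $H_T$, and an isomorphism $H_T\cong H_{T'}$ forces a rooted-tree isomorphism $T\cong T'$. Your proposed workaround (``the union of isthmi together with the components that become trees'') does not succeed without this reduction, because if $H$ itself carries isthmi they get mixed in with $E(T)$ and the rooted tree $T$ is no longer canonically recoverable from $H_T$. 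Note also that your phrase ``the finite cyclic core $H$'' presupposes $\alpha$ finite, whereas the second assertion must cover infinite $\alpha$ as well; the reduction to isthmus-free $H$ works uniformly in both cases and makes the recovery step a one-liner.
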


\begin{proof}
Let $G$ be a graph which has an $\F$-root $H$, i.e., $\F(H)\cong G$, and let $\alpha$ be the order of $H$.  We may assume $H$ has no isthmi and no isolated vertices unless it is $K_1$.

Suppose $\alpha$ is finite; then let $T$ be a tree, disjoint from $H$, of any finite order $n$.  Identify any vertex $v$ of $H$ with any vertex $w$ of $T$.  The resulting graph $H_T$ also has $G$ as its forest graph since $T$ is contained in every maximal forest of $H_T$.  As the order of $H_T$ is $\alpha+n-1$ and $n$ can be any natural number, the graphs $H_T$ are an infinite number of non-isomorphic finite graphs with the same forest graph up to isomorphism.

Suppose $\alpha$ is finite or infinite and $\beta \geq \alpha$ is infinite.  Let $T$ be a rooted tree of order $\beta$ with root vertex $w$; for instance, $T$ can be a star rooted at the star center.  Attach $T$ to a vertex $v$ of $H$ by identifying $v$ with the root vertex $w$.  Denote the resulting graph by $H_T$; it is an $\F$-root of $G$ and it has order $\beta$ because it has order $\alpha+\beta$, which equals $\beta$ because $\beta$ is infinite and $\beta\geq\alpha$.  
As $H$ has no isthmi, $T$ and $w$ are determined by $H_T$; therefore, if we have a non-isomorphic rooted tree $T'$ with root $w'$ (that means there is no isomorphism of $T$ with $T'$ in which $w$ corresponds to $w'$), $H_{T'}$ is not isomorphic to $H_T$.  (The one exception is when $H=K_1$, which is easy to treat separately.)  The number of non-isomorphic $\F$-roots of $G$ of order $\beta$ is therefore at least the number of non-isomorphic rooted trees of order $\beta$, i.e., $t_\beta$.
\end{proof}

Proposition \ref{l4.1} still does not capture the essence of the number of $\F$-roots.  Whitney's \emph{2-operations} on a graph $G$ are the following \cite{whitney1933-2isom}:
\begin{enumerate}[\quad (1)]
\item \emph{Whitney vertex identification.}  Identify a vertex in one component of $G$ with a vertex in a another component of $G$, thereby reducing the number of components by 1.  For an infinite graph we modify this by allowing an infinite number of vertex identifications; specifically, let $W$ be a set of vertices with at most one from each component of $G$, and let $\{W_i: i\in I\}$ be a partition of $W$ into $|I|$ sets (where $I$ is any index set); then for each $i\in I$ we identify all the vertices in $W_i$ with each other.  
\item \emph{Whitney vertex splitting.}  The reverse of vertex identification.  
\item \emph{Whitney twist.}  If $u,v$ are two vertices that separate $G$---that is, $G=G_1\cup G_2$ where $G_1\cap G_2=\{u,v\}$ and $|V(G_1)|, |V(G_2)|>2$, then reverse the names $u$ and $v$ in $G_2$ and then take the union $G_1\cup G_2$ (so vertex $u$ in $G_1$ is identified with the former vertex $v$ in $G_2$ and $v$ with the former vertex $u$).  Call the new graph $G'$.  For an infinite graph we allow an infinite number of Whitney twists.  
\end{enumerate}
It is easy to see that the edge sets of maximal forests in $G$ and $G'$ are identical, hence $\F(G)$ and $\F(G')$ are naturally isomorphic.  It follows by Whitney vertex identification that every graph with an $\F$-root has a connected $\F$-root, and it follows from Whitney vertex splitting that every graph with an $F$-root has an $\F$-root without cut vertices.

We may conclude from Proposition \ref{l4.1} that the most interesting question about the number of $\F$-roots of a graph $G$ that has an $\F$-root is not the total number of non-isomorphic $\F$-roots (which by Proposition \ref{l4.1} cannot be assigned any cardinality); it is not the number of a given order; it is not even the number that have no isthmi; it is the number of non-2-isomorphic, connected $\F$-roots with no isthmi and (except when $G=K_1$) no isolated vertices.


We do not know which graphs have $\F$-roots, but we do know two large classes that cannot have $\F$-roots.

\begin{theorem}\label{t4.1}
No infinite connected graph has an $\F$-root.
\end{theorem}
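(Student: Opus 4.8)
The plan is to argue by contradiction, leveraging the connectivity dichotomy for forest graphs that has already been established. Suppose, toward a contradiction, that $G$ is an infinite connected graph possessing an $\F$-root $H$, so that $\F(H)\cong G$. Then $\F(H)$ is simultaneously \emph{connected} (being isomorphic to the connected graph $G$) and \emph{infinite} (being isomorphic to the infinite graph $G$), since isomorphism preserves both connectivity and cardinality.

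The decisive step is to invoke Corollary \ref{l2.14}, which asserts that for any graph the forest graph $\F(\cdot)$ is connected if and only if it is finite. Applied to $H$, the connectivity of $\F(H)$ forces $\F(H)$ to be finite, directly contradicting the fact that $\F(H)\cong G$ is infinite. This contradiction shows that no such $H$ exists, establishing the theorem.

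It is worth recording where the substantive content actually lies, so that the brevity of the argument is not mistaken for a gap. The dichotomy in Corollary \ref{l2.14} rests on Proposition \ref{2.8}(v): if $H$ has infinitely many cycles, say $|\mathfrak{C}(H)|=\beta$, then $\F(H)$ splits into $2^{\beta}$ components and is therefore disconnected; whereas if $H$ has only finitely many cycles, then the union of its cycles is a finite graph and $\F(H)$ has only finitely many vertices. An infinite connected graph is neither disconnected nor finite, so it can satisfy neither horn of the dichotomy. Consequently there is no genuine obstacle to overcome here: once Proposition \ref{2.8} and Corollary \ref{l2.14} are in hand, this theorem is an immediate corollary, and the only point requiring care is the routine observation that passing to an isomorphic copy changes neither the number of components nor the order.
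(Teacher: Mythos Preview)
Your proof is correct and follows exactly the same approach as the paper, which also derives the result directly from Corollary~\ref{l2.14}; the paper's entire proof is the single line ``This follows by Corollary~\ref{l2.14}.'' Your additional paragraph unpacking why the dichotomy holds is accurate commentary but not needed for the argument itself.
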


\begin{proof}
This follows by Corollary \ref{l2.14}.
\end{proof}


\begin{theorem}\label{l4.2}
No bipartite graph $G$ has an $\F$-root.
\end{theorem}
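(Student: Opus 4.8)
The plan is to use the standard characterization that a graph is bipartite if and only if it contains no odd cycle, and to show that any forest graph with at least one edge must contain a triangle. So the whole task reduces to producing a $3$-cycle in $\F(H)$ whenever $\F(H)$ is nontrivial. I would phrase the result as: the only bipartite forest graph is $K_1$, and flag that $K_1$ (which is vacuously bipartite and satisfies $\F(K_1)=K_1$) is the lone degenerate exception that the statement implicitly excludes; for every other $G$ with an $\F$-root I will exhibit an odd cycle.

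The main argument proceeds by a case split on whether the $\F$-root $H$ is acyclic. First, if $H$ has no cycle, then $H$ is itself a forest and is its own unique maximal forest, so $\F(H)=K_1$. Second, if $H$ has a cycle, then—because the graphs here have no loops or multiple edges, every cycle is finite and of length $n\geq 3$—$H$ contains a cycle of finite length $n\geq 3$. Lemma \ref{cycle-kn} then gives that $\F(H)$ contains $K_n$ as a subgraph, and in particular $K_3$. Since a triangle is an odd cycle, $\F(H)$ is not bipartite, and the proof is complete.

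There is no serious obstacle once Lemma \ref{cycle-kn} is available: the entire content is the reduction to ``$H$ has a cycle'' together with the observation that a finite cycle in $H$ forces a complete, hence odd-cycle-bearing, subgraph in $\F(H)$. If one prefers a self-contained route that does not invoke Lemma \ref{cycle-kn}, I would instead reuse the three-forest construction from the proof of Lemma \ref{fg-order}: starting from adjacent maximal forests $F$ and $F'=F-e+f$, the edge $e$ lies on a cycle $C$ of $H$ with a third edge $g$, and setting $F''=F-e+g$ when $g\notin F$ or $F''=F-g+f$ when $g\in F$ yields a maximal forest adjacent to both $F$ and $F'$; verifying that $F,F',F''$ are pairwise adjacent produces the triangle directly. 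In either route the only points requiring attention are the finiteness of cycles (so that the finite-length hypothesis of Lemma \ref{cycle-kn} is automatic) and the isolation of the $K_1$ exception, which is why the statement is understood to apply to bipartite graphs carrying at least one edge.
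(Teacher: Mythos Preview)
Your proof is correct and follows essentially the same approach as the paper: split on whether the $\F$-root $H$ is acyclic, use Lemma~\ref{cycle-kn} to produce a $K_3$ in $\F(H)$ when $H$ has a cycle, and conclude that $\F(H)$ cannot be bipartite. Your flagging of the $K_1$ exception is apt; the paper handles it by tacitly assuming the bipartite graph has order $p\geq 2$.
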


\begin{proof} Let $G$ be a bipartite graph of order $p$ $(p\geq2)$ and let $H$ be a root of $G$, i.e., $\F(H)\cong G$.
Suppose $H$ has no cycle; then $\F(H)$ is $K_{1}$, which is a contradiction. Therefore $H$ has a cycle of length $\geq 3$. It follows by Lemma \ref{cycle-kn} that $\F(H)$ contains $K_{3}$, a contradiction. Hence no bipartite graph $G$ has a root.
\end{proof}


\section{$\F$-Convergence and $\F$-Divergence}\label{convergence}

In this section we establish the necessary and sufficient conditions for $\F$-convergence of a graph.


\begin{lemma}\label{l3}
Let $G$ be a finite graph that contains a $C_n$ (for $n\geq4$) or at least two edge disjoint triangles; then $G$ is $\F$-divergent.
\end{lemma}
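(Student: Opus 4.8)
The plan is to read the conclusion off directly from Lemma \ref{l2.2}, whose hypothesis coincides exactly with the hypothesis of the present lemma. First I would invoke Lemma \ref{l2.2}: since $G$ contains a cycle $C_n$ with $n\geq4$ or two edge-disjoint triangles, we obtain that $\F^m(G)$ contains $K_{m^2}$ as a subgraph for every finite $m\geq1$. (Here all iterates are well defined because $G$ is finite, so $\F^m(G)$ is finite for every $m$.)

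From this I would extract the single quantitative consequence needed: a graph containing $K_{m^2}$ has at least $m^2$ vertices, so $|V(\F^m(G))|\geq m^2$. Hence the orders of the iterates $\F^m(G)$ grow without bound as $m\to\infty$.

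To finish, I would argue that unbounded order forces $\F$-divergence. If $G$ were $\F$-convergent, then by Definition \ref{d2} the set $\{\F^n(G):n\in\mathbb{N}\}$ would be finite (up to isomorphism) and would therefore contain a graph of maximum order, say $M$. But choosing any $m$ with $m^2>M$ gives $|V(\F^m(G))|\geq m^2>M$, a contradiction. Consequently the iterates include graphs of arbitrarily large order, so $\{\F^n(G):n\in\mathbb{N}\}$ is infinite, i.e., $G$ is $\F$-divergent.

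I expect no genuine obstacle here: all the substance already resides in Lemma \ref{l2.2}, and the remaining step is the routine translation from ``arbitrarily large complete subgraphs'' to ``arbitrarily large order'' to ``infinitely many non-isomorphic iterates.'' One could equally well track the clique number $\omega(\F^m(G))\geq m^2$ in place of the order; either invariant is unbounded and so rules out a finite orbit. The only point worth a moment's care is to confirm that ``$\{\F^n(G)\}$ is finite'' is meant up to isomorphism, but since graphs of distinct orders are automatically non-isomorphic, the argument is insensitive to that reading.
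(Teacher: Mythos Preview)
Your proposal is correct and follows essentially the same route as the paper: both invoke Lemma~\ref{l2.2} to obtain $K_{m^2}\subseteq \F^m(G)$ and then observe that some isomorphism invariant of $\F^m(G)$ is unbounded, precluding a finite orbit. The only cosmetic difference is that the paper tracks the clique number while you track the order (and you yourself note that either works).
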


\begin{proof}
Let $G$ be a finite graph.
By Lemma \ref{l2.2}, $\F^m(G)$ contains $K_{m^2}$ as a subgraph. Therefore, as $m$ increases the clique size of $\F^m(G)$ increases. Hence $G$ is $\F$-divergent. 
\end{proof}


\begin{lemma}\label{l3.4}
If $|\mathfrak{C}(G)| = \beta$ where $\beta$ is infinite, then $G$ is $\F$-divergent.
\end{lemma}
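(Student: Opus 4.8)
The plan is to iterate Proposition \ref{2.8}(vii) and exploit the strict inequality $\beta<2^\beta$ (cardinal property (3)) to show that the successive forest graphs $\F^n(G)$ have strictly increasing numbers of cycles, so they are pairwise non-isomorphic. First I would observe that, unlike the tree graph operator, the forest graph operator is defined for every graph, so $\F^n(G)$ is meaningful for all $n\in\mathbb{N}$ even though $\F(G)$ is infinite and (by Corollary \ref{l2.14}) disconnected. This is what makes iteration legitimate here.

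Next I would set up a sequence of cardinals by $\beta_0=\beta$ and $\beta_{n+1}=2^{\beta_n}$, and note by an immediate induction that every $\beta_n$ is infinite, since $\beta_0=\beta$ is infinite and $2^{\kappa}$ is infinite whenever $\kappa$ is. The core step is then a second induction showing $|\mathfrak{C}(\F^n(G))|=\beta_n$ for all $n$. The base case $n=0$ is the hypothesis $|\mathfrak{C}(G)|=\beta$. For the inductive step, assuming $|\mathfrak{C}(\F^n(G))|=\beta_n$ with $\beta_n$ infinite, I apply Proposition \ref{2.8}(vii) to the graph $\F^n(G)$ to conclude $|\mathfrak{C}(\F^{n+1}(G))|=2^{\beta_n}=\beta_{n+1}$.

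To finish, I would invoke cardinal property (3): $\beta_n<2^{\beta_n}=\beta_{n+1}$ for every $n$, so the cardinals $\beta_0,\beta_1,\beta_2,\ldots$ are pairwise distinct. Since two graphs with different numbers of cycles cannot be isomorphic, the graphs $\F^n(G)$ are pairwise non-isomorphic, and hence $\{\F^n(G):n\in\mathbb{N}\}$ is infinite. By Definition \ref{d2} this is exactly the statement that $G$ is $\F$-divergent.

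I do not expect a genuine obstacle: essentially all the content has been front-loaded into Proposition \ref{2.8}, and the argument is a clean iteration. The only point requiring a moment's care is verifying that the hypothesis of Proposition \ref{2.8}---an \emph{infinite} cycle set---is preserved under iteration, which is immediate because $2^{\beta_n}$ is infinite whenever $\beta_n$ is. One could also remark that the conclusion holds regardless of whether $\{\F^n(G)\}$ is read as a set of labeled graphs or of isomorphism types, since having infinitely many non-isomorphic terms forces the set to be infinite in either reading.
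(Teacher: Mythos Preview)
Your proof is correct and follows essentially the same approach as the paper: iterate Proposition~\ref{2.8}(vii) to obtain $|\mathfrak{C}(\F^n(G))|=2^{\beta_{n-1}}$, then invoke Cantor's inequality $\beta<2^{\beta}$ to get a strictly increasing tower of cycle counts, forcing the iterates to be pairwise non-isomorphic. The paper's proof is a two-line version of the same argument; your additional remarks (well-definedness of the iteration, preservation of the infinite-cycle hypothesis, labeled versus isomorphism-type reading) are correct but not strictly needed.
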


\begin{proof}
Assume $|\mathfrak{C}(G)|=\beta$ ($\beta$ infinite).  By Proposition \ref{2.8}(vii), 
as $2^{\beta}<2^{2^{\beta}}<2^{2^{2^{\beta}}}<\cdots$, it follows that $|\mathfrak{C}(\F(G))|<|\mathfrak{C}(\F^2(G))|<|\mathfrak{C}(\F^3(G))|<\cdots$. Therefore, as $n$ increases $|\mathfrak{C}(\F^n(G))|$ increases.  Hence $G$ is $\F$-divergent.
\end{proof}


\begin{theorem}\label{t3.1}
Let $G$ be a graph. Then,
\begin{description}
  \item[i)] $G$ is $\F$-convergent if and only if either $G$ is acyclic or $G$ has only one cycle, which is of length $3$.
  \item[ii)] If $G$ is $\F$-convergent, then it converges in at most two steps.
   \end{description}
\end{theorem}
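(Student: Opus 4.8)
The plan is to prove both directions of (i) by cataloguing the cyclic structure of $G$, and then read off (ii) from the convergence direction. The key dividing line is how many cycles $G$ has and how long they are. I would organize the argument around the results already established: Lemmas \ref{l3} and \ref{l3.4} give divergence for the ``large'' cases, so the work splits into (a) using those lemmas to dispose of every graph that is \emph{not} acyclic-or-single-triangle, and (b) directly computing the forest graph for the two surviving cases.

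For the ``only if'' direction I would argue by contrapositive: assume $G$ is neither acyclic nor a single $3$-cycle, and show $G$ is $\F$-divergent. If $|\mathfrak{C}(G)|$ is infinite, Lemma \ref{l3.4} immediately gives divergence, so I may assume $G$ has finitely many cycles, hence (since each cycle is finite) the union of all cycles is a finite subgraph. Now I would split into cases on this finite cyclic part. If $G$ contains any cycle $C_n$ with $n\geq 4$, Lemma \ref{l3} gives divergence. Otherwise every cycle has length $3$; since $G$ is not a single triangle, it has at least two distinct triangles. If two triangles are edge-disjoint, Lemma \ref{l3} again applies. The one remaining subcase is two (or more) triangles sharing an edge: here I would note that two triangles sharing exactly one edge form $K_4$ minus an edge, which contains a $C_4$ (the two triangles' outer edges plus\ldots), or more simply observe that two triangles on a shared edge produce a $4$-cycle through the four vertices, again invoking Lemma \ref{l3}. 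I expect this shared-edge configuration to be the one fiddly spot requiring an explicit small-graph check.

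For the ``if'' direction, the two convergent cases are handled by direct computation. If $G$ is acyclic, then $G$ is itself a forest, so it has a unique maximal forest (namely $G$ with its isthmi, i.e.\ $G$ itself), whence $\F(G)=K_1$; and $\F(K_1)=K_1$ since $K_1$ is acyclic, so the orbit is $\{G, K_1\}$ and stabilizes. If $G$ has exactly one cycle, which is a $C_3$, then by Lemma \ref{cycle-kn} (and the remark $\F(C_n)=K_n$) the three maximal forests obtained by deleting one edge of the triangle pairwise differ by one edge, giving $\F(G)=K_3$; then $\F(K_3)=\F(C_3)=K_3$, so $K_3$ is fixed and the orbit is $\{G,K_3\}$. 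In both cases $\{\F^n(G):n\in\mathbb{N}\}$ is finite, proving $\F$-convergence, and simultaneously establishing (ii): convergence occurs after at most one application (reaching $K_1$ or $K_3$), which is itself fixed, so in particular within two steps.

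The \textbf{main obstacle} I anticipate is not any single hard estimate but ensuring the case analysis in the ``only if'' direction is genuinely exhaustive: one must confirm that every graph outside the two convergent classes falls under either Lemma \ref{l3} or Lemma \ref{l3.4}, and the delicate case is a graph whose entire cyclic part consists of exactly two triangles sharing an edge (so no $C_{\geq 4}$ is \emph{named} outright and the triangles are not edge-disjoint). I would resolve this by exhibiting explicitly the $4$-cycle living inside such a configuration, thereby reducing it to the $C_n$ ($n\geq 4$) case of Lemma \ref{l3}.
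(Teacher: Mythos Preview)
Your plan follows essentially the same route as the paper: handle the two convergent cases directly, then use Lemma~\ref{l3.4} for infinitely many cycles and Lemma~\ref{l3} for the remaining finite-cycle situations. Your treatment of the ``two triangles sharing an edge'' subcase is actually more explicit than the paper's; note, though, that once you exhibit a $C_4$ there you have contradicted the case hypothesis ``every cycle has length~$3$'', so that subcase is simply vacuous rather than a separate invocation of Lemma~\ref{l3}.

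There is one genuine gap. Lemma~\ref{l3} is stated and proved only for \emph{finite} graphs, but after disposing of the infinite-cycle case you still allow $G$ itself to be infinite: you only know that the union of all cycles is a finite subgraph $G_0$. You then invoke Lemma~\ref{l3} on $G$ without justifying why you may do so. What is missing is the reduction step the paper makes explicit: every edge of $G$ outside $G_0$ is an isthmus (and there may be isolated vertices), so each maximal forest of $G$ is obtained from a maximal forest of $G_0$ by adjoining all the isthmi; this gives a natural isomorphism $\F(G)\cong\F(G_0)$, and hence $\F^m(G)\cong\F^m(G_0)$ for all $m$. Only after this identification can Lemma~\ref{l3} be applied to the finite graph $G_0$ to conclude that $G$ is $\F$-divergent. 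Add this reduction and your argument is complete.
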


\begin{proof}
i) If $G$ has no cycle, then it is a forest and $\F(G)$ is $K_1$.  If $G$ has only one cycle and that cycle has length $3$, then $\F(G)$ is $K_3$. Therefore in each case $G$ is $\F$-convergent.

Conversely, suppose that $G$ has a cycle of length greater than $3$ or has at least two triangles.  
If G has infinitely many cycles, then it follows by Lemma \ref{l3.4} that $G$ is $\F$-divergent.
Therefore we may assume that $G$ has a finite number of cycles.  If $G$ has a finite number of vertices, then it is finite and by Lemma \ref{l3} it is $\F$-divergent.  Therefore $G$ has an infinite number of vertices.  However, it can have only a finite number of edges that are not isthmi, because each cycle is finite.  Thus $G$ consists of a finite graph $G_0$ and any number of isthmi and isolated vertices.  Since $\F(G)$ depends only on the edges that are not isthmi and the vertices that are not isolated, $\F(G)=\F(G_0)$ (under the natural identification of maximal forests in $G_0$ with their extensions in $G$ by adding all isthmi of $G$).  Therefore, $G$ is $\F$-divergent.

ii) If $G$ has no cycle, then $G$ is a forest and $\F(G)\cong\F^2(G)\cong K_1$. If $G$ has only one cycle, which is of length 3, then $\F(G)\cong\F^2(G)\cong K_3$.  Therefore $G$ converges in at most 2 steps.
\end{proof}

\begin{corollary}\label{c3.1}
A graph $G$ is $\F$-stable if and only if $G = K_1$ or $K_3$.
\end{corollary}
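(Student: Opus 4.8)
The plan is to obtain this corollary as a quick consequence of the $\F$-convergence theorem rather than by a fresh argument. The key observation is that $\F$-stability is an extreme form of $\F$-convergence: if $G$ is $\F$-stable, so that $\F(G)\cong G$, then $\F^n(G)\cong G$ for every $n$, and hence the orbit $\{\F^n(G):n\in\mathbb{N}\}$ consists of a single isomorphism type. In particular $G$ is $\F$-convergent, so Theorem \ref{t3.1}(i) applies and tells us that $G$ is either acyclic or has exactly one cycle, which has length $3$. This reduces the whole problem to computing $\F(G)$ explicitly in just these two families and comparing the result against $G$ itself.

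First I would dispose of the easy direction. If $G=K_1$, its only maximal forest is $K_1$, so $\N(G)$ is a single vertex and $\F(K_1)=K_1$. If $G=K_3=C_3$, then, as noted after Lemma \ref{cycle-kn}, $\F(C_3)=K_3$; concretely, the three spanning trees obtained by deleting one edge of the triangle pairwise differ by a single edge, so they form $K_3$. Thus both $K_1$ and $K_3$ are $\F$-stable. For the converse I would run the two cases supplied by Theorem \ref{t3.1}. If $G$ is acyclic then $G$ is itself its unique maximal forest, whence $\F(G)=K_1$; stability then forces $G\cong K_1$. If instead $G$ has a single cycle, of length $3$, then the only freedom in choosing a maximal forest is which of the three triangle edges to delete (all isthmi lie in every maximal forest and isolated vertices are irrelevant), so $\F(G)\cong K_3$; stability then gives $G\cong\F(G)\cong K_3$. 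Combining the cases yields $G\cong K_1$ or $G\cong K_3$, as required.

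I do not expect a genuine obstacle here, since everything rests on Theorem \ref{t3.1}; the only points requiring care are computational rather than conceptual. One must confirm the exact value of $\F(G)$ in each convergent case — in particular that appended isthmi and isolated vertices leave $\F(G)$ unchanged, so that a one-triangle graph always yields $\F(G)\cong K_3$ regardless of how many additional tree edges it carries. This same remark explains why the stability hypothesis is essential and cannot be weakened to mere boundedness of the orbit: an infinite tree with one triangle attached satisfies $\F(G)\cong K_3$ yet is not isomorphic to $K_3$, and it is precisely the identity $G\cong\F(G)$ that pins $G$ down to $K_1$ or $K_3$. I would also flag at the outset that $\F$-stability is to be read as $\F(G)\cong G$, consistent with the isomorphism-based treatment of $\F$-roots and $\F$-depth elsewhere in the paper.
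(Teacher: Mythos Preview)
Your proposal is correct and matches the paper's approach: the paper states this result as an immediate corollary of Theorem~\ref{t3.1} without supplying a separate proof, and your argument---stability implies convergence, Theorem~\ref{t3.1}(i) reduces to the acyclic and single-triangle cases, and in each case $\F(G)$ is computed explicitly and compared with $G$---is precisely the intended unpacking.
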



\section{$\F$-Depth}\label{depth}

In this section we establish results about the $\F$-depth of a graph.

\begin{theorem}\label{l5.1}
Let $G$ be a finite graph.  The $\F$-depth of $G$ is infinite if and only if $G$ is $K_{1}$ or $K_{3}$.
\end{theorem}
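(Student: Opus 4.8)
The plan is to prove both implications, dispatching the ``if'' direction as a one-line consequence of $\F$-stability and concentrating the work on the ``only if'' direction, where I would bound the depth explicitly.

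For the easy direction, suppose $G=K_1$ or $G=K_3$. By Corollary \ref{c3.1} such a $G$ is $\F$-stable, so $\F(G)\cong G$ and hence $\F^n(G)\cong G$ for every $n$. Taking $H=G$ shows $G\cong\F^n(H)$ for all $n\in\mathbb{N}$, so the supremum defining the $\F$-depth is infinite.

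For the converse I would prove the contrapositive: assuming $G$ is finite with $G\neq K_1,K_3$, I would show that whenever $G\cong\F^n(H)$ with $n\geq1$ one has $n\leq\sqrt{|V(G)|}$, which makes the defining supremum finite. Fix such an $H$ and split into two cases according to the hypothesis of Lemma \ref{l2.2}. First, if $H$ contains a cycle of finite length $\geq4$ or two edge-disjoint triangles, then Lemma \ref{l2.2} gives $K_{n^2}\subseteq\F^n(H)\cong G$; since $G$ is finite this forces $n^2\leq|V(G)|$, i.e. $n\leq\sqrt{|V(G)|}$. In the remaining case, where $H$ has no cycle of length $\geq4$ and no two edge-disjoint triangles, I would argue that $H$ has at most one cycle, necessarily a triangle: every cycle must be a triangle, and two distinct triangles can share only $0$ or $1$ edges (sharing two edges would force the same three vertices). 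Sharing $0$ edges is excluded as edge-disjointness, while two triangles sharing exactly one edge have their four non-shared edges forming a $4$-cycle, also excluded. Hence distinct triangles cannot coexist, so $\F(H)$ is $K_1$ (if $H$ is acyclic, since then every edge is an isthmus and $H$ is its unique maximal forest) or $K_3$ (if $H$ carries its single triangle). As $K_1,K_3$ are $\F$-stable, $\F^n(H)\cong\F(H)\in\{K_1,K_3\}$ for all $n\geq1$, contradicting $G\neq K_1,K_3$; so this case contributes no admissible $n\geq1$. Combining the cases, every $n$ with $G\cong\F^n(H)$ satisfies $n\leq\sqrt{|V(G)|}$, so the $\F$-depth is finite.

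The main obstacle is the structural step in the second case: recognizing that the negation of the hypothesis of Lemma \ref{l2.2} pins $H$ down to having at most one triangle and no other cycle, which is exactly what lets me invoke $\F$-stability and reach a contradiction with $G\neq K_1,K_3$. The crucial sub-observation is that two triangles sharing a single edge always create a $4$-cycle, so under our assumptions two distinct triangles cannot appear together at all.
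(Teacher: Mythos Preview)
Your proof is correct and follows essentially the same strategy as the paper: both directions hinge on $\F$-stability for $K_1,K_3$ and on Lemma~\ref{l2.2} to force $K_{n^2}\subseteq G$ when the root satisfies that lemma's hypothesis, with the complementary case dispatched by showing $\F^n(H)\in\{K_1,K_3\}$. Your organization is slightly cleaner: you extract an explicit bound $n\leq\sqrt{|V(G)|}$ rather than arguing by contradiction with a specific $m=|V(G)|$, you avoid the paper's separate treatment of $|V(G)|<4$ and $|V(G)|=4$, and you spell out carefully why ``no cycle of length $\geq4$ and no two edge-disjoint triangles'' forces at most one triangle (in particular, the observation that two triangles sharing exactly one edge create a $4$-cycle), a step the paper leaves implicit.
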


\begin{proof}
Let $G$ be a finite graph.
Suppose that $G$ is $K_1$ or $K_3$. Then by Corollary \ref{c3.1}, it follows that $G$ is $\F$-stable. Therefore, the $\F$-depth of $G$ is infinite.

Conversely, suppose that $G$ is different from $K_{1}$ and $K_{3}$.

\textbf{Case 1:}  Let $|V|<4$.  
Then $G$ has no $\F$-root so its $\F$-depth is zero.

\textbf{Case 2:}  Let $|V|=4$.  
Suppose $G$ has an $\F$-root $H$ (i.e., $\F(H)\cong G$).  Then $H$ should have exactly $4$ maximal forests. That is possible only when $H$ has only one cycle, which is of length $4$.  By Lemma \ref{cycle-kn} it follows that $\F(H)$ contains $K_4$, hence it is $K_4$. Therefore $G$ has an $\F$-root if and only if it is $K_4$.  Hence the $\F$-depth of $G$ is zero, except that the depth of $K_4$ is 1. 

\textbf{Case 3:}  Let $|V|=n$ where $n>4$.
Suppose that $G$ has infinite $\F$-depth. Then for every $m$ there is a graph $H_{m}$ such that $\F^{m}(H_{m})=G$.  If $H_{m}$ does not have two triangles or a cycle of length greater than $3$, then $H_{m}$ has only one cycle which is of length $3$, or no cycle and $H_{m}$ converges  to $K_{1}$ or $K_{3}$ in at most two steps, a contradiction. Therefore $H_{m}$ has two triangles or a cycle of length greater than $3$.  By Lemma \ref{l2.2} it follows that $\F^{m}(H_{m})$ contains $K_{m^{2}}$ for each $m\geq2$, so that in particular $\F^{n}(H_{n})$ contains $K_{n^{2}}$.  That is, $G$ contains $K_{n^2}$.
This is impossible as $G$ has order $n$.  Hence the $\F$-depth of $G$ is finite.
\end{proof}


\begin{theorem}\label{t5.2}
The $\F$-depth of any infinite graph is finite.
\end{theorem}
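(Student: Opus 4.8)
The plan is to attach a single cardinal invariant to $G$ and then forbid an infinite descent by the well-ordering of the cardinals. First I would dispose of the trivial case: if $G$ has no $\F$-root its $\F$-depth is $0$. Otherwise, suppose $\F^n(H)\cong G$ with $G$ infinite. Applying Corollary \ref{l2.14} downward from $G=\F^n(H)$, each graph $\F^k(H)$ for $0\le k\le n$ is infinite and has infinitely many cycles (if the union of cycles of $\F^k(H)$ were finite, then $\F^{k+1}(H)$ would be finite, contradicting that it is an infinite graph). Writing $\beta_k=|\mathfrak{C}(\F^k(H))|$, Proposition \ref{2.8}(vii) gives $\beta_{k+1}=2^{\beta_k}$, so $\beta_0<\beta_1<\cdots<\beta_n=|\mathfrak{C}(G)|$ is a strictly increasing chain of infinite cardinals (strict since $\beta<2^\beta$). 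In particular $|\mathfrak{C}(G)|=2^{\beta_{n-1}}$, so by Corollary \ref{not2power} the invariant $|\mathfrak{C}(G)|$ is a successor cardinal, and passing from $G$ to any $\F$-root strictly decreases this invariant.

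The main step is a minimal-counterexample argument on this invariant. Suppose, for contradiction, that infinite $\F$-depth occurs, and let $\gamma$ be the \emph{least} infinite cardinal arising as $|\mathfrak{C}(G)|$ for an infinite graph $G$ admitting towers $\F^n(H)\cong G$ of unbounded height. Since the depth is at least $2$, for each large $n$ the graph $G$ has an $\F$-root $R=\F^{n-1}(H)$, and by Proposition \ref{2.8}(vii) its invariant $\delta=|\mathfrak{C}(R)|$ satisfies $2^\delta=\gamma$, whence $\delta<\gamma$. Each such root has tower height at least $n-1$, so the ``bad'' property—unbounded tower height—transfers from $\gamma$ to the set $P(\gamma)=\{\delta:2^\delta=\gamma\}$ of admissible root-invariants. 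I would then argue that this property is carried by a \emph{single} $\delta\in P(\gamma)$ with $\delta<\gamma$, contradicting the minimality of $\gamma$; since the cardinals are well-ordered, no such minimal $\gamma$ can exist, and the theorem follows.

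The hard part is exactly this last inference, for infinite depth of $G$ a priori yields, for each $n$, only \emph{some} $\F$-root of height $\ge n-1$, and these roots may differ. To localize the descent at one invariant I would exploit the structure of $P(\gamma)$ together with Corollary \ref{not2power}: every $\beta_k$ with $k\ge1$ is a power of $2$ and hence a successor cardinal, so the backward chain $\gamma=\beta_n>\beta_{n-1}>\cdots>\beta_0$ can pass only through successor cardinals and must terminate at $\aleph_0$ or at a limit cardinal, neither of which is a $2$-power. The essential obstacle—and the point at which the theory of cardinal numbers is indispensable—is to convert this termination into a \emph{uniform} bound, i.e.\ to show that a fixed cardinal $\gamma$ cannot be an $n$-fold iterated power of $2$ over an infinite base for arbitrarily large $n$. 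This is what forces $P(\gamma)$ to be controlled enough that the supremum of root-heights is attained at a single $\delta<\gamma$, and it is the crux on which the whole proof turns; once it is in hand, the minimality of $\gamma$ delivers the contradiction and establishes that every infinite graph has finite $\F$-depth.
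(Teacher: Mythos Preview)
Your approach shares the paper's core idea---attach a cardinal invariant (you use $|\mathfrak{C}(G)|$, the paper uses $|V(G)|$; these agree by Proposition~\ref{2.8}(ii)), note that passing to an $\F$-root strictly decreases it, and invoke well-ordering of the cardinals---but you wrap it in a minimal-counterexample argument rather than the paper's direct infinite descent (``continuing in like manner we get an infinite decreasing sequence of cardinal numbers''). You are also right to flag that infinite depth of $G$ a priori yields only, for each $n$, \emph{some} root of depth $\ge n-1$, and that these roots need not cohere into a single infinite chain; this is a genuine subtlety that the paper's one-line phrasing passes over.

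However, your proposal does not close this gap---it only names it. You declare the crux to be that a fixed $\gamma$ cannot be an $n$-fold iterated $2$-power over an infinite base for arbitrarily large $n$, and then say ``once it is in hand'' the contradiction follows, without supplying any argument for the crux itself. Your attempt to constrain the descent also contains an error: you assert that every power of $2$ is a successor cardinal, but this is false in ZFC (for instance it is consistent that $2^{\aleph_0}=\aleph_{\omega_1}$, a limit cardinal), so invoking Corollary~\ref{not2power} to force the backward chain through successor cardinals is unjustified. Finally, even granting your crux, the minimal-counterexample step is still incomplete: the roots $R_n$ of depth $\ge n-1$ are \emph{different} graphs, and unbounded finite depths among graphs with invariants in $P(\gamma)$ do not produce a single graph with invariant $\delta<\gamma$ of infinite depth, which is exactly what the minimality of $\gamma$ would need. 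In short, you have correctly located the difficulty but stopped precisely where the real work begins.
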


\begin{proof}
Let $G$ be a graph of infinite order $\alpha$.  If $G$ has an $\F$-root, then $G$ is without isthmi or isolated vertices.

If $G$ is connected, Theorem \ref{t4.1} implies that $G$ has no root. Therefore its $\F$-depth is zero.

If $G$ is disconnected, assume it has infinite depth.  Then for each natural number $n$ there exists a graph $H_n$ such that $G\cong\F^n(H_n)$.  
Let $\beta_n$ denote the order of $H_n$.  Since $\F(H_1)\cong G$, by Proposition \ref{2.8}(ii) $\alpha=2^{\beta_1}$, from which we infer that $\beta_1<\alpha$.  
This is independent of which root $H_1$ is, so in particular we can take $H_1=\F(H_2)$ and conclude that $\beta_1=2^{\beta_2}$, hence that $\beta_2<\beta_1$.  
Continuing in like manner we get an infinite decreasing sequence of cardinal numbers starting with $\alpha$.  The cardinal numbers are well ordered \cite{kamke1950theory}, so they cannot contain such an infinite sequence.  It follows that the $\F$-depth of $G$ must be finite.
\end{proof}


\subsection*{Acknowledgements} This work was supported by DST-CMS project Lr.No.SR/S4/MS:516/07, Dt.21-04-2008.

T.~Zaslavsky also thanks the C.R.\ Rao Advanced Institute of Mathematics, Statistics and Computer Science for its hospitality during the initiation of this paper.


\end{document}